\renewcommand{\mathbb}{\mathbf}
\newcommand{\newjointcountertheorem}[3]{
	\newaliascnt{#1}{#2}
	\newtheorem{#1}[#1]{#3}
	\aliascntresetthe{#1}	
}
\newtheorem*{thm-cov}{Theorem \ref{covariance of the weight distribution}}
\newtheorem*{thm-wlln}{Theorem \ref{wlln}}
\newtheorem*{cnj-wlln}{Conjecture \ref{cnj-wlln}}
\newtheorem{thm}{Theorem}[section]
\theoremstyle{definition}
\DeclareMathOperator{\Cov}{Cov}
\DeclareMathOperator{\Meas}{Meas}
\DeclareMathOperator{\supp}{supp}
\newcommand{\Measc}{\Meas_\mathrm{c}} % Combined macro to ensure that index c is not italic.
\DeclareMathOperator{\Var}{Var}
\newcommand{\qbinom}[2]{\genfrac{[}{]}{0pt}{}{#1}{#2}}
\def\Snospace~{\S{}}
\newcommand{\weakto}{\stackrel{\mathrm{w}}{\longrightarrow}}
\newcommand{\E}[2]{\operatorname{E}_{#1}[#2]}
\newcommand{\nicedot}{\raisebox{0.35ex}{\tikz \fill (0,0) circle (1pt);}} %A nice looking dot for functions
\newcommand{\annrel}[2]{\stackrel{\makebox[0pt]{\scriptsize #1}}{#2}} % A nicely formatted text over =
\renewcommand{\phi}{\varphi}
\renewcommand{\epsilon}{\varepsilon}
\numberwithin{equation}{section}
\begin{document}

	\title[Law of large numbers for Demazure modules]{On the Law of Large Numbers for \\Demazure Modules of $\widehat{\mathfrak{sl}}_2$}

	\author[Thomas Bliem]{Thomas Bliem}
  	\address{Thomas Bliem, Department of Mathematics\\San Francisco State University\\1600 Holloway Ave\\San Francisco CA 94109\\United States}
  	\email{\href{mailto:bliem@math.sfsu.edu}{bliem@math.sfsu.edu}}
  	%\thanks{$^*$Supported by the Deutsche Forschungsgemeinschaft, SPP 1388.}

  	\author[Stavros Kousidis]{Stavros Kousidis}
	%\address{Stavros Kousidis, Institute for Theoretical Physics\\ETH Zurich\\Wolfgang--Pauli--Strasse 27\\CH-8093 Zurich\\Switzerland}
  	\address{\selectlanguage{ngerman}Stavros Kousidis, Mathematisches Institut, Universit\"at zu K\"oln, Weyertal 86--90, 50931 K\"oln, Germany}
	\email{\href{mailto:st.kousidis@googlemail.com}{st.kousidis@googlemail.com}}
%\urladdr{\href{http://www.mi.uni-koeln.de/~skousidi}{http://www.mi.uni-koeln.de/\~{}skousidi}}
  	%\thanks{$^\dagger$Supported by the Deutsche Forschungsgemeinschaft, SFB/TR 12.}

	\subjclass[2010]{Primary 06B15 Representation theory, Secondary 60B99 Probability theory on algebraic and topological structures.}
	\keywords{affine Kac-Moody algebra, Demazure module, variance, covariance, law of large numbers}
	
	\begin{abstract}
		We determine the covariance of the weight distribution in level $1$ Demazure modules of $\widehat{\mathfrak{sl}}_2$. This allows us to prove a weak law of large numbers for these weight distributions, and leads to a conjecture about the asymptotic concentration of weights for arbitrary Demazure modules.
	\end{abstract}

	\maketitle

	%\tableofcontents

\section{Introduction}
%%%%%%%%%%%
	
	The motivation for this work is to understand the qualitative features of dimensions of weight spaces in Demazure modules $V_w(\Lambda)$ as the highest weight $\Lambda$ is fixed and the length of the Weyl group element $w$ becomes large.
%	This is encoded in the weight distribution
%	\[
%		\sum_{\lambda \in \mathfrak{h}^\ast} \dim V_w(\Lambda)_\lambda \cdot \delta_\lambda
%		.
%	\]
	We want to be able to answer questions like:  Where is the bulk of weights, counted with their individual multiplicities, concentrated?  Do they tend to spread out or do they concentrate in a certain part of the Weyl polytope? Etc.

	Even though there is a large body of work on Demazure modules, these kind of questions are still difficult to answer.
	We believe that in order to answer them, one needs to look at weight distributions with a probabilistic eye, starting with the computation of the expected value and the covariance, possibly followed by skewness and kurtosis later.
	We determined the expected value of the weight distribution of any Demazure module of $\widehat{\mathfrak{sl}}_2$ in \cite{bk10}.
	The next step is to determine the covariance, which we complete in the present article for the modules in level $1$:
	
	\begin{thm-cov}
		Let $j \in \{0, 1\}$ and $w \in W^\mathrm{aff}$ such that the length $l(ws_j) < l(w) = N$.	
	Then the covariance matrix $\Sigma$ of the degree $\langle -d, \nicedot \rangle$ and the finite weight $\langle \alpha_1^\vee, \nicedot \rangle$ in $V_{w}(\Lambda_j)$ is given by
	\[
		\Sigma = \begin{cases}
			\begin{pmatrix}
				\frac{N(N-1)(2N+5)}{96} & 0 \\
			  0                       & N
			\end{pmatrix}
			& \text{if $N \equiv j \mod (2)$,} \\
			\begin{pmatrix}
				\frac{N(N-1)(2N+5)}{96} + \frac N4 &	 \frac N2\\
				\frac N2                           & N
			\end{pmatrix}
			& \text{if $N \not\equiv j \mod (2)$.}
		\end{cases}			
	\]
	\end{thm-cov}

	See \autoref{example figures} for some examples.
	\begin{figure}
		\begin{tabular}{cccc}
			\vtop{\null{\hbox{
				\begin{tikzpicture}[x=1.5em,y=-1.5em]
					\draw[thick] (0,13.75) ellipse (1.5811 and 4.8412);
					\draw (0,-2) node{finite weight};
					\draw (-7,12.5) node[rotate=90]{degree};
					\foreach \x in {-10,-8,...,10} \draw (\x/2, -1) node{\x}; % keine Dollars, weil das Minus im Mathemodus zu lang wird.
					\foreach \x in {0,...,25} \draw (-6,\x) node{$\x$}; 
					\draw (0,0) node[fill=white,inner sep=0.2ex]{$1$};
					\draw (-1,1) node[fill=white,inner sep=0.2ex]{$1$};
					\draw (0,1) node[fill=white,inner sep=0.2ex]{$1$};
					\draw (1,1) node[fill=white,inner sep=0.2ex]{$1$};
					\draw (-1,2) node[fill=white,inner sep=0.2ex]{$1$};
					\draw (0,2) node[fill=white,inner sep=0.2ex]{$2$};
					\draw (1,2) node[fill=white,inner sep=0.2ex]{$1$};
					\draw (-1,3) node[fill=white,inner sep=0.2ex]{$2$};
					\draw (0,3) node[fill=white,inner sep=0.2ex]{$3$};
					\draw (1,3) node[fill=white,inner sep=0.2ex]{$2$};
					\draw (-2,4) node[fill=white,inner sep=0.2ex]{$1$};
					\draw (-1,4) node[fill=white,inner sep=0.2ex]{$3$};
					\draw (0,4) node[fill=white,inner sep=0.2ex]{$5$};
					\draw (1,4) node[fill=white,inner sep=0.2ex]{$3$};
					\draw (2,4) node[fill=white,inner sep=0.2ex]{$1$};
					\draw (-2,5) node[fill=white,inner sep=0.2ex]{$1$};
					\draw (-1,5) node[fill=white,inner sep=0.2ex]{$5$};
					\draw (0,5) node[fill=white,inner sep=0.2ex]{$7$};
					\draw (1,5) node[fill=white,inner sep=0.2ex]{$5$};
					\draw (2,5) node[fill=white,inner sep=0.2ex]{$1$};
					\draw (-2,6) node[fill=white,inner sep=0.2ex]{$2$};
					\draw (-1,6) node[fill=white,inner sep=0.2ex]{$6$};
					\draw (0,6) node[fill=white,inner sep=0.2ex]{$9$};
					\draw (1,6) node[fill=white,inner sep=0.2ex]{$6$};
					\draw (2,6) node[fill=white,inner sep=0.2ex]{$2$};
					\draw (-2,7) node[fill=white,inner sep=0.2ex]{$3$};
					\draw (-1,7) node[fill=white,inner sep=0.2ex]{$9$};
					\draw (0,7) node[fill=white,inner sep=0.2ex]{$11$};
					\draw (1,7) node[fill=white,inner sep=0.2ex]{$9$};
					\draw (2,7) node[fill=white,inner sep=0.2ex]{$3$};
					\draw (-2,8) node[fill=white,inner sep=0.2ex]{$4$};
					\draw (-1,8) node[fill=white,inner sep=0.2ex]{$10$};
					\draw (0,8) node[fill=white,inner sep=0.2ex]{$14$};
					\draw (1,8) node[fill=white,inner sep=0.2ex]{$10$};
					\draw (2,8) node[fill=white,inner sep=0.2ex]{$4$};
					\draw (-3,9) node[fill=white,inner sep=0.2ex]{$1$};
					\draw (-2,9) node[fill=white,inner sep=0.2ex]{$5$};
					\draw (-1,9) node[fill=white,inner sep=0.2ex]{$13$};
					\draw (0,9) node[fill=white,inner sep=0.2ex]{$16$};
					\draw (1,9) node[fill=white,inner sep=0.2ex]{$13$};
					\draw (2,9) node[fill=white,inner sep=0.2ex]{$5$};
					\draw (3,9) node[fill=white,inner sep=0.2ex]{$1$};
					\draw (-3,10) node[fill=white,inner sep=0.2ex]{$1$};
					\draw (-2,10) node[fill=white,inner sep=0.2ex]{$7$};
					\draw (-1,10) node[fill=white,inner sep=0.2ex]{$14$};
					\draw (0,10) node[fill=white,inner sep=0.2ex]{$18$};
					\draw (1,10) node[fill=white,inner sep=0.2ex]{$14$};
					\draw (2,10) node[fill=white,inner sep=0.2ex]{$7$};
					\draw (3,10) node[fill=white,inner sep=0.2ex]{$1$};
					\draw (-3,11) node[fill=white,inner sep=0.2ex]{$2$};
					\draw (-2,11) node[fill=white,inner sep=0.2ex]{$8$};
					\draw (-1,11) node[fill=white,inner sep=0.2ex]{$16$};
					\draw (0,11) node[fill=white,inner sep=0.2ex]{$19$};
					\draw (1,11) node[fill=white,inner sep=0.2ex]{$16$};
					\draw (2,11) node[fill=white,inner sep=0.2ex]{$8$};
					\draw (3,11) node[fill=white,inner sep=0.2ex]{$2$};
					\draw (-3,12) node[fill=white,inner sep=0.2ex]{$2$};
					\draw (-2,12) node[fill=white,inner sep=0.2ex]{$9$};
					\draw (-1,12) node[fill=white,inner sep=0.2ex]{$16$};
					\draw (0,12) node[fill=white,inner sep=0.2ex]{$20$};
					\draw (1,12) node[fill=white,inner sep=0.2ex]{$16$};
					\draw (2,12) node[fill=white,inner sep=0.2ex]{$9$};
					\draw (3,12) node[fill=white,inner sep=0.2ex]{$2$};
					\draw (-3,13) node[fill=white,inner sep=0.2ex]{$3$};
					\draw (-2,13) node[fill=white,inner sep=0.2ex]{$10$};
					\draw (-1,13) node[fill=white,inner sep=0.2ex]{$18$};
					\draw (0,13) node[fill=white,inner sep=0.2ex]{$20$};
					\draw (1,13) node[fill=white,inner sep=0.2ex]{$18$};
					\draw (2,13) node[fill=white,inner sep=0.2ex]{$10$};
					\draw (3,13) node[fill=white,inner sep=0.2ex]{$3$};
					\draw (-3,14) node[fill=white,inner sep=0.2ex]{$3$};
					\draw (-2,14) node[fill=white,inner sep=0.2ex]{$10$};
					\draw (-1,14) node[fill=white,inner sep=0.2ex]{$16$};
					\draw (0,14) node[fill=white,inner sep=0.2ex]{$19$};
					\draw (1,14) node[fill=white,inner sep=0.2ex]{$16$};
					\draw (2,14) node[fill=white,inner sep=0.2ex]{$10$};
					\draw (3,14) node[fill=white,inner sep=0.2ex]{$3$};
					\draw (-3,15) node[fill=white,inner sep=0.2ex]{$4$};
					\draw (-2,15) node[fill=white,inner sep=0.2ex]{$10$};
					\draw (-1,15) node[fill=white,inner sep=0.2ex]{$16$};
					\draw (0,15) node[fill=white,inner sep=0.2ex]{$18$};
					\draw (1,15) node[fill=white,inner sep=0.2ex]{$16$};
					\draw (2,15) node[fill=white,inner sep=0.2ex]{$10$};
					\draw (3,15) node[fill=white,inner sep=0.2ex]{$4$};
					\draw (-4,16) node[fill=white,inner sep=0.2ex]{$1$};
					\draw (-3,16) node[fill=white,inner sep=0.2ex]{$4$};
					\draw (-2,16) node[fill=white,inner sep=0.2ex]{$10$};
					\draw (-1,16) node[fill=white,inner sep=0.2ex]{$14$};
					\draw (0,16) node[fill=white,inner sep=0.2ex]{$16$};
					\draw (1,16) node[fill=white,inner sep=0.2ex]{$14$};
					\draw (2,16) node[fill=white,inner sep=0.2ex]{$10$};
					\draw (3,16) node[fill=white,inner sep=0.2ex]{$4$};
					\draw (4,16) node[fill=white,inner sep=0.2ex]{$1$};
					\draw (-4,17) node[fill=white,inner sep=0.2ex]{$1$};
					\draw (-3,17) node[fill=white,inner sep=0.2ex]{$5$};
					\draw (-2,17) node[fill=white,inner sep=0.2ex]{$9$};
					\draw (-1,17) node[fill=white,inner sep=0.2ex]{$13$};
					\draw (0,17) node[fill=white,inner sep=0.2ex]{$14$};
					\draw (1,17) node[fill=white,inner sep=0.2ex]{$13$};
					\draw (2,17) node[fill=white,inner sep=0.2ex]{$9$};
					\draw (3,17) node[fill=white,inner sep=0.2ex]{$5$};
					\draw (4,17) node[fill=white,inner sep=0.2ex]{$1$};
					\draw (-4,18) node[fill=white,inner sep=0.2ex]{$1$};
					\draw (-3,18) node[fill=white,inner sep=0.2ex]{$4$};
					\draw (-2,18) node[fill=white,inner sep=0.2ex]{$8$};
					\draw (-1,18) node[fill=white,inner sep=0.2ex]{$10$};
					\draw (0,18) node[fill=white,inner sep=0.2ex]{$11$};
					\draw (1,18) node[fill=white,inner sep=0.2ex]{$10$};
					\draw (2,18) node[fill=white,inner sep=0.2ex]{$8$};
					\draw (3,18) node[fill=white,inner sep=0.2ex]{$4$};
					\draw (4,18) node[fill=white,inner sep=0.2ex]{$1$};
					\draw (-4,19) node[fill=white,inner sep=0.2ex]{$1$};
					\draw (-3,19) node[fill=white,inner sep=0.2ex]{$4$};
					\draw (-2,19) node[fill=white,inner sep=0.2ex]{$7$};
					\draw (-1,19) node[fill=white,inner sep=0.2ex]{$9$};
					\draw (0,19) node[fill=white,inner sep=0.2ex]{$9$};
					\draw (1,19) node[fill=white,inner sep=0.2ex]{$9$};
					\draw (2,19) node[fill=white,inner sep=0.2ex]{$7$};
					\draw (3,19) node[fill=white,inner sep=0.2ex]{$4$};
					\draw (4,19) node[fill=white,inner sep=0.2ex]{$1$};
					\draw (-4,20) node[fill=white,inner sep=0.2ex]{$1$};
					\draw (-3,20) node[fill=white,inner sep=0.2ex]{$3$};
					\draw (-2,20) node[fill=white,inner sep=0.2ex]{$5$};
					\draw (-1,20) node[fill=white,inner sep=0.2ex]{$6$};
					\draw (0,20) node[fill=white,inner sep=0.2ex]{$7$};
					\draw (1,20) node[fill=white,inner sep=0.2ex]{$6$};
					\draw (2,20) node[fill=white,inner sep=0.2ex]{$5$};
					\draw (3,20) node[fill=white,inner sep=0.2ex]{$3$};
					\draw (4,20) node[fill=white,inner sep=0.2ex]{$1$};
					\draw (-4,21) node[fill=white,inner sep=0.2ex]{$1$};
					\draw (-3,21) node[fill=white,inner sep=0.2ex]{$3$};
					\draw (-2,21) node[fill=white,inner sep=0.2ex]{$4$};
					\draw (-1,21) node[fill=white,inner sep=0.2ex]{$5$};
					\draw (0,21) node[fill=white,inner sep=0.2ex]{$5$};
					\draw (1,21) node[fill=white,inner sep=0.2ex]{$5$};
					\draw (2,21) node[fill=white,inner sep=0.2ex]{$4$};
					\draw (3,21) node[fill=white,inner sep=0.2ex]{$3$};
					\draw (4,21) node[fill=white,inner sep=0.2ex]{$1$};
					\draw (-4,22) node[fill=white,inner sep=0.2ex]{$1$};
					\draw (-3,22) node[fill=white,inner sep=0.2ex]{$2$};
					\draw (-2,22) node[fill=white,inner sep=0.2ex]{$3$};
					\draw (-1,22) node[fill=white,inner sep=0.2ex]{$3$};
					\draw (0,22) node[fill=white,inner sep=0.2ex]{$3$};
					\draw (1,22) node[fill=white,inner sep=0.2ex]{$3$};
					\draw (2,22) node[fill=white,inner sep=0.2ex]{$3$};
					\draw (3,22) node[fill=white,inner sep=0.2ex]{$2$};
					\draw (4,22) node[fill=white,inner sep=0.2ex]{$1$};
					\draw (-4,23) node[fill=white,inner sep=0.2ex]{$1$};
					\draw (-3,23) node[fill=white,inner sep=0.2ex]{$2$};
					\draw (-2,23) node[fill=white,inner sep=0.2ex]{$2$};
					\draw (-1,23) node[fill=white,inner sep=0.2ex]{$2$};
					\draw (0,23) node[fill=white,inner sep=0.2ex]{$2$};
					\draw (1,23) node[fill=white,inner sep=0.2ex]{$2$};
					\draw (2,23) node[fill=white,inner sep=0.2ex]{$2$};
					\draw (3,23) node[fill=white,inner sep=0.2ex]{$2$};
					\draw (4,23) node[fill=white,inner sep=0.2ex]{$1$};
					\draw (-4,24) node[fill=white,inner sep=0.2ex]{$1$};
					\draw (-3,24) node[fill=white,inner sep=0.2ex]{$1$};
					\draw (-2,24) node[fill=white,inner sep=0.2ex]{$1$};
					\draw (-1,24) node[fill=white,inner sep=0.2ex]{$1$};
					\draw (0,24) node[fill=white,inner sep=0.2ex]{$1$};
					\draw (1,24) node[fill=white,inner sep=0.2ex]{$1$};
					\draw (2,24) node[fill=white,inner sep=0.2ex]{$1$};
					\draw (3,24) node[fill=white,inner sep=0.2ex]{$1$};
					\draw (4,24) node[fill=white,inner sep=0.2ex]{$1$};
					\draw (-5,25) node[fill=white,inner sep=0.2ex]{$1$};
					\draw (-4,25) node[fill=white,inner sep=0.2ex]{$1$};
					\draw (-3,25) node[fill=white,inner sep=0.2ex]{$1$};
					\draw (-2,25) node[fill=white,inner sep=0.2ex]{$1$};
					\draw (-1,25) node[fill=white,inner sep=0.2ex]{$1$};
					\draw (0,25) node[fill=white,inner sep=0.2ex]{$1$};
					\draw (1,25) node[fill=white,inner sep=0.2ex]{$1$};
					\draw (2,25) node[fill=white,inner sep=0.2ex]{$1$};
					\draw (3,25) node[fill=white,inner sep=0.2ex]{$1$};
					\draw (4,25) node[fill=white,inner sep=0.2ex]{$1$};
					\draw (5,25) node[fill=white,inner sep=0.2ex]{$1$};
				\end{tikzpicture}
			}}} & \vtop{\null{\hbox{
				\begin{tikzpicture}[x=.96bp,y=-.96bp]
					\pgftext[top]{\includegraphics{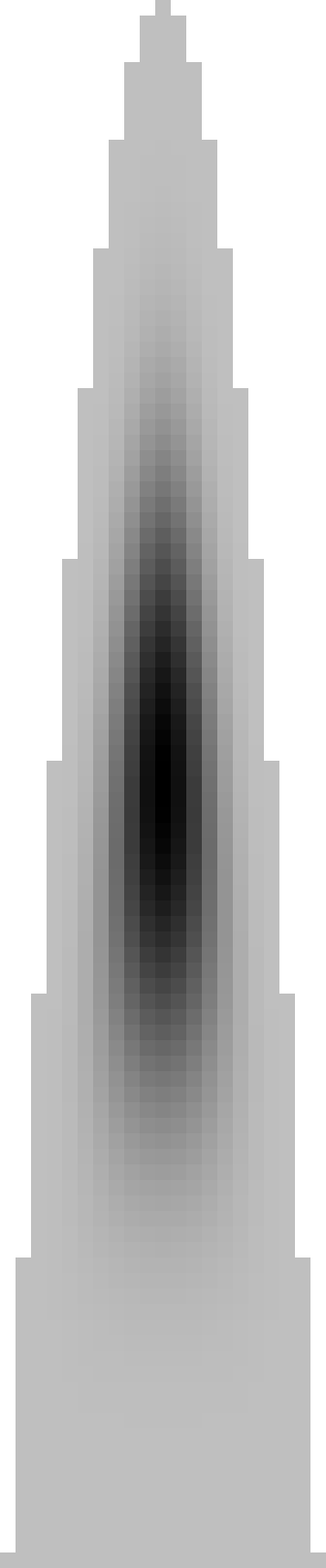}};
					\draw[white] (0,53) ellipse (2.2361 and 13.3463);
				\end{tikzpicture}
			}}} & \vtop{\null{\hbox{
				\begin{tikzpicture}[x=.96bp,y=-.96bp]
					\pgftext[top]{\includegraphics{30}};
					\draw[white] (0,116.75) ellipse (2.7386 and 24.2706);
				\end{tikzpicture}
			}}} & \vtop{\null{\hbox{
				\begin{tikzpicture}[x=.96bp,y=-.96bp]
					\pgftext[top]{\includegraphics{40}};
					\draw[white] (0,205.5) ellipse (3.1623 and 37.1652);
				\end{tikzpicture}
			}}}
		\end{tabular}
		\caption{Weight distribution of $V_{(s_1 s_0)^k}(\Lambda_0)$ for $k = 5$ (indicated by numbers) and $k = 10, 15, 20$ (indicated by shades of gray).
		The covariances are represented by covariance ellipses.}
		\label{example figures}
	\end{figure}
	Our \autoref{covariance of the weight distribution} is enough to obtain the weak law of large numbers as a corollary, to our knowledge the first result to give an idea about the overall weight distribution in Demazure modules as the length of the Weyl group element becomes large:

	\begin{thm-wlln}
		Let $(\Lambda^{(k)})$ be a sequence in $\{\Lambda_0, \Lambda_1\}$, and $(w^{(k)})$ a sequence in $W^\mathrm{aff}$ such that $l(w^{(k)}) \to \infty$.
		Let $\tilde\mu^{(k)} \in \Meas(\mathbf{R}^2)$ be the joint distribution of the degree and the finite weight in $V_{w^{(k)}}(\Lambda^{(k)})$, normalized to a probability distribution and rescaled individually in the two coordinates such that $\supp(\tilde\mu^{(k)})$ just fits into the rectangle $[0,1] \times [-1,1]$.
		Then	, as $k \to \infty$,
		\[
			\tilde \mu^{(k)} \weakto \delta_{(\frac 12, 0)} .
		\]
	\end{thm-wlln}
	
	Some examples of $\tilde\mu^{(k)}$ are shown in \autoref{fig wlln}. Furthermore, based on our present result and \cite[Corollary 4.3]{bk10} we propose the following conjecture:
	
	\begin{cnj-wlln}
	Fix a dominant integral weight $\Lambda$ and a sequence $(w^{(k)})$ in $W^{\mathrm{aff}}$ such that $l(w^{(k)}) \to \infty$.
	Let $\mu^{(k)} \in \Meas(\mathbf{N} \times \mathbf{Z})$ be the joint distribution of the degree and the finite weight in $V_{w^{(k)}}(\Lambda)$.
		Let $\tilde\mu^{(k)} \in \Meas(\mathbf{R}^2)$ be the distribution obtained from $\mu^{(k)}$ by normalizing to a probability distribution and rescaling the two coordinates individually so that $\supp(\tilde\mu^{(k)})$ just fits into the rectangle $[0,1] \times [-1,1]$.
		Then, as $k \to \infty$,
		\[
			\tilde\mu^{(k)} \weakto \delta_{\left( \frac{\langle c, \Lambda \rangle +2}{3(\langle c, \Lambda \rangle+1)}, 0 \right)} ,
		\]
		where $c = \alpha_0^\vee + \alpha_1^\vee$ denotes the canonical central element.\end{cnj-wlln}

	\begin{figure}
		\begin{tikzpicture}[x=4em,y=-4em]
			\pgftext[top]{\includegraphics[width=8em,height=4em]{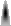}};
			\draw (-1.1,0) -- (-1.3,0) node[left]{$0$};
			\draw (-1.1,1) -- (-1.3,1) node[left]{$1$};
			\draw (-1,-0.1) -- (-1,-0.3) node[above]{$-1$};
			\draw (1,-0.1) -- (1,-0.3) node[above]{$1$};
			\draw (-1,0) -- (1,0) -- (1,1) -- (-1,1) -- cycle;
		\end{tikzpicture}
		\quad
		\begin{tikzpicture}[x=4em,y=-4em]
			\pgftext[top]{\includegraphics[width=8em,height=4em]{20.png}};
			\draw (-1.1,0) -- (-1.3,0) node[left]{$0$};
			\draw (-1.1,1) -- (-1.3,1) node[left]{$1$};
			\draw (-1,-0.1) -- (-1,-0.3) node[above]{$-1$};
			\draw (1,-0.1) -- (1,-0.3) node[above]{$1$};
			\draw (-1,0) -- (1,0) -- (1,1) -- (-1,1) -- cycle;
		\end{tikzpicture}
		\quad
		\begin{tikzpicture}[x=4em,y=-4em]
			\pgftext[top]{\includegraphics[width=8em,height=4em]{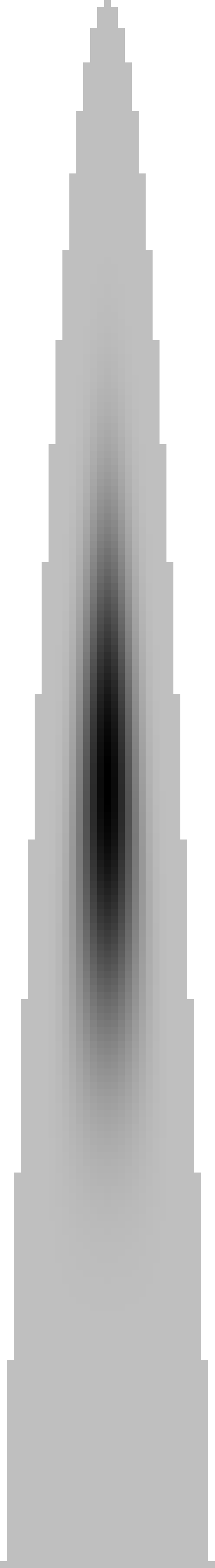}};
			\draw (-1.1,0) -- (-1.3,0) node[left]{$0$};
			\draw (-1.1,1) -- (-1.3,1) node[left]{$1$};
			\draw (-1,-0.1) -- (-1,-0.3) node[above]{$-1$};
			\draw (1,-0.1) -- (1,-0.3) node[above]{$1$};
			\draw (-1,0) -- (1,0) -- (1,1) -- (-1,1) -- cycle;
		\end{tikzpicture}
		\quad
		\begin{tikzpicture}[x=4em,y=-4em]
			\pgftext[top]{\includegraphics[width=8em,height=4em]{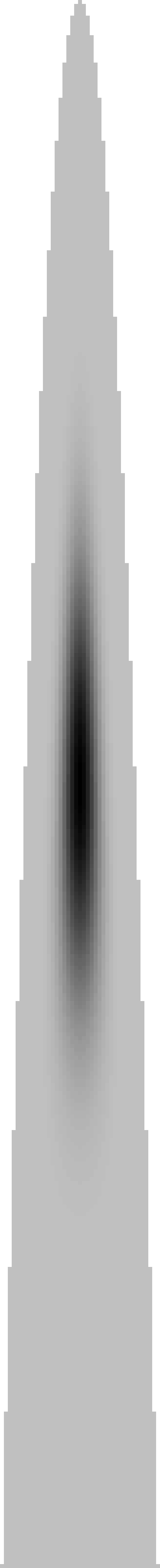}};
			\draw (-1.1,0) -- (-1.3,0) node[left]{$0$};
			\draw (-1.1,1) -- (-1.3,1) node[left]{$1$};
			\draw (-1,-0.1) -- (-1,-0.3) node[above]{$-1$};
			\draw (1,-0.1) -- (1,-0.3) node[above]{$1$};
			\draw (-1,0) -- (1,0) -- (1,1) -- (-1,1) -- cycle;
		\end{tikzpicture}
		\caption{Weight distributions of $V_{(s_1 s_0)^k}(\Lambda_0)$ for $k = 5, 10, 15, 20$, rescaled into the rectangle $[0,1] \times [-1,1]$.}
		\label{fig wlln}
	\end{figure}
	
	The outline of the proof of \autoref{covariance of the weight distribution} and the organization of the paper is as follows:
	It is easy to show that the degree and the finite weight are uncorrelated.
	The covariance of the finite weight can be directly obtained from a known result by Sanderson about the real characters of Demazure modules \cite{MR1407880}.
	Hence the main part is about the variance of the degree distribution.
	The strategy is to proceed by induction along Demazure's character formula.
	In \autoref{sec:technical results}, we show that a single recursion step expresses the second moment of the degree distribution of a given Demazure module in terms of the \emph{third} moments of the weight distribution of a smaller Demazure module.
	We try to express these third moments in terms of the (known) third moments of the distribution of the finite weight, but succeed not quite (\autoref{specialized fake recursion}).
	In \autoref{sec:palindromicity}, we show that the weight multiplicities are symmetric in each string of weights differing only by a multiple of the null root $\delta$ (\autoref{strings are symmetric}).
	We use this in \autoref{sec:stretching} to show that the covariance between two specific quadratic functions vanishes (\autoref{intermediate covariance}).
	This allows us to explicitly compute the previously problematic third moments (\autoref{determine linear form}).
	This yields an explicit recurrence relation of a purely additive nature (\autoref{modified recurrence relation}), which is easy to solve (\autoref{degree variance}).
	We complement our result in \autoref{covariance section} by determining the full covariance matrix (\autoref{covariance of the weight distribution}).
	In \autoref{sec:wlln} we deduce a weak law of large numbers for our Demazure modules (\autoref{wlln}) and formulate a conjecture for the general case (\autoref{cnj-wlln}).

\section{Notation}
\label{sec:notation}
%%%%%%%%%
	
	For general notation about Kac--Moody algebras we mostly follow Kac \cite{MR1104219}.
	Let $\mathfrak{h}$ be a $3$-dimensional complex vector space and  $\alpha_0^\vee, \alpha_1^\vee \in \mathfrak{h}$, $\alpha_0, \alpha_1 \in \mathfrak{h}^*$ a realization of the generalized Cartan matrix $A = \left( \begin{smallmatrix} 2 & -2 \\ -2 & 2 \end{smallmatrix} \right)$.
	Let $\widehat{\mathfrak{sl}}_2 = \mathfrak{g}(A)$ be the associated affine Lie algebra.
	We denote the canonical pairing between $\mathfrak{h}$ and $\mathfrak{h}^*$ by $\langle h, \alpha \rangle = \alpha(h)$.
	Choose $d \in \mathfrak{h}$ such that $\langle d, \alpha_0 \rangle = 1$ and $\langle d, \alpha_1 \rangle = 0$.
	Such an element $d$ is called a \emph{scaling element}.
	The \emph{degree} of $\lambda \in \mathfrak{h}^*$ is defined as $\langle -d, \lambda \rangle$.
	The set $\{ \alpha_0^\vee, \alpha_1^\vee, d \}$ is a basis of $\mathfrak{h}$.
	Let $\{ \Lambda_0, \Lambda_1, \delta \}$ be the corresponding dual basis of $\mathfrak{h}^*$.
	Then $\delta = \alpha_0 + \alpha_1$, and $\Lambda_0$, $\Lambda_1$ are called \emph{fundamental weights}.
	
	Integrable highest weight modules of $\widehat{\mathfrak{sl}}_2$ are parametrized up to isomorphism by dominant integral weights
	\[
		\Lambda = m \Lambda_0 + n \Lambda_1 + z \delta
	\]
	for $m, n \in \mathbf{N}$ and $z \in \mathbf{C}$.
	We denote the integrable highest weight module corresponding to $\Lambda$ by $V(\Lambda)$.
	As a change in $z$ simply corresponds to the choice of a different scaling element $d \in \mathfrak{h}$, it is customary to suppose $z = 0$ and only consider dominant integral weights of the form $\Lambda = m \Lambda_0 + n \Lambda_1$, which we do from now on.
	
	Let $c = \alpha_0^\vee + \alpha_1^\vee$ be the canonical central element.
	The \emph{level} of a weight $\lambda \in \mathfrak{h}^*$ is defined as $\langle c, \lambda \rangle$.
	If $\lambda = m\Lambda_0 + n\Lambda_1$, then its level is $\langle c, \lambda \rangle = m + n$.
	Hence the dominant integral weights of level $1$ are exactly the fundamental weights $\Lambda_0$, $\Lambda_1$.
	
	A weight $\lambda \in \mathfrak{h}^*$ is said to \emph{occur} in a given integrable highest weight module $V(\Lambda)$ if the weight space $V(\Lambda)_\lambda$ is nontrivial.
	The set of weights occuring in $V(\Lambda)$ is contained in the (affine) lattice
	\[
		\Gamma = \Lambda + \mathbf{Z}\alpha_0 + \mathbf{Z} \alpha_1 .
	\]
	We define coordinates $a, b$ on $\Gamma$ by
	\[
		\lambda = \Lambda - a(\lambda)\alpha_0 - b(\lambda)\alpha_1
	\]
	for all $\lambda \in \Gamma$.
	Note that $a, b$ depend on $\Lambda$.
	In \autoref{example figures}\footnote{The software used to produce most of the figures in this article is available at \url{http://sourceforge.net/projects/demazure}.}, each matrix component resp.\ pixel represents a point in $\Gamma$.
	We write $\Gamma_j = \Lambda_j + \mathbf{Z}\alpha_0 + \mathbf{Z} \alpha_1$ for $j\in \{ 0, 1\}$ to refer to the two lattices corresponding to the fundamental weights. 
		
	For $j \in \{0, 1\}$ define linear maps $s_j : \mathfrak{h}^* \to \mathfrak{h}^*$ by
	\[
		s_j(\lambda) = \lambda - \langle \alpha_j^\vee, \lambda \rangle \alpha_j .
	\]
	The Weyl group $W^{\mathrm{aff}}$ of $\widehat{\mathfrak{sl}}_2$ is by definition the subgroup of $\mathrm{GL}(\mathfrak{h}^*)$ generated by $s_0$ and $s_1$.
	All elements of $W^{\mathrm{aff}}$ have the form
	\[
		w_{N,0} = \underbrace{\cdots s_0 s_1 s_0}_{\text{$N$ factors}}
		\quad \text{or} \quad
		w_{N,1} = \underbrace{\cdots s_1 s_0 s_1}_{\text{$N$ factors}}
	\]
	for $N \geq 0$.
	We abbreviate $w_N = w_{N,0}$, as these are the elements we will mostly discuss.
	Write $l(w)$ for the length of a reduced decomposition of $w \in W^{\mathrm{aff}}$, so $l(w_{N,j}) = N$.
	
	Let $\mathfrak{n}_+ \subset \widehat{\mathfrak{sl}}_2$ be the sum of the positive root spaces.
	For $w \in W^\mathrm{aff}$ and $\Lambda$ a dominant integral weight, define the \emph{Demazure module} $V_w(\Lambda)$ \cite{MR862200,MR894387,MR980506} to be the $(\mathfrak{h} \oplus \mathfrak{n}_+)$-module generated by $V(\Lambda)_{w\Lambda}$.
	As $V_w(\Lambda)$ is in particular an $\mathfrak{h}$-module, it has a weight space decomposition
	\[
		V_w(\Lambda) = \bigoplus_{\lambda \in \mathfrak{h}^*} V_w(\Lambda)_\lambda .
	\]
	Let $\Measc(\Gamma)$ denote the set of measures on $\Gamma$ with compact (hence finite) support. We define the \emph{weight distribution} of $V_{w_{N,j}}(\Lambda)$ to be
	\[
		\mu_{N,j}
		= \sum_{\lambda \in \mathfrak{h}^*} \dim(V_{w_{N,j}}(\Lambda)_\lambda) \cdot \delta_\lambda
		\in \Measc(\Gamma)
	\]
	and again abbreviate $\mu_N = \mu_{N,0}$ for the case mostly considered.
	Note that the dependence on $\Lambda$ is important, but only implicit in the notation.
	
	Given $\Lambda$, define operators $D_0, D_1$ on the space $\Measc^\pm(\Gamma)$ of signed measures on $\Gamma$ with compact support by 
	\[
		D_j \delta_\lambda 
		= \sum_{i=0}^{\langle \alpha_j^\vee, \lambda \rangle} \delta_{\lambda - i \alpha_j}
 	\]
	for $j \in \{0, 1\}$ and $\lambda \in \Gamma$.
	Here we use the conventions that
		$\sum_{i=0}^{-1} a_i = 0$
	and
		$\sum_{i=0}^k a_i = - \sum_{i=k+1}^{-1} a_i$
	for $k < -1$, hence the necessity to consider signed measures.
	Demazure's character formula for Kac--Moody algebras \cite{MR862200,MR894387,MR980506} states that
	\[
		\mu_{N,0} = \underbrace{\cdots D_0 D_1 D_0}_{\text{$N$ factors}} \delta_\Lambda
		\quad \text{and} \quad
		\mu_{N,1} = \underbrace{\cdots D_1 D_0 D_1}_{\text{$N$ factors}} \delta_\Lambda .
	\]
	Consider elements of $\mathfrak{h}$ as functions on $\mathfrak{h}^*$.
	We refer to the push-forward measure $(-d)_* \mu_{N,j}$ as the \emph{degree distribution}, and to $(\alpha_1^\vee)_* \mu_{N,j}$ as the \emph{distribution of the finite weight} of $V_{w_{N,j}}(\Lambda)$.
	Note that in terms of the coordinates $a$, $b$ we have that as functions on $\Gamma_i$
	\[
		-d = a
		\quad \text{and} \quad
		\alpha_1^\vee = \begin{cases}
			-2(a-b)     & \text{if $j = 0$,} \\
			-2(a-b) + 1 & \text{if $j = 1$.}
		\end{cases}
	\]
	Hence the degree distribution is $a_* \mu_{N,j}$, and the distribution of the finite weight is $(a-b)_* \mu_{N,j}$ up to translation and scaling.

	Recall that the expected value of a function $f : \Gamma \to \mathbf{R}$ with respect to a nonzero measure $\mu \in \Measc(\Gamma)$ is
	\[
		\E{\mu}{f} = \frac{1}{\mu(\Gamma)} \sum_{\lambda \in \Gamma} \mu(\{\lambda\}) f(\lambda).
	\]
	The covariance of two functions $f$ and $g$ is
	\[
		\Cov_\mu(f, g) = \E{\mu}{(f - \E{\mu}{f})(g - \E{\mu}{g})},
	\]
	and the variance of $f$ is $\Var_\mu(f) = \Cov_\mu(f,f)$.
	%Let $\mathbf{N} = \{ 0, 1, 2, \ldots \}$, and denote the parity of an integer $N$ by $\parity{N} = \frac{1-(-1)^N}{2}$.

\section{Almost a recursion formula}
\label{sec:technical results}
%%%%%%%%%%%%%%

	The characters of Demazure modules for the highest weight $\Lambda_1$ are easily obtained from the ones for highest weight $\Lambda_0$ by symmetry of the Dynkin diagram, so we start by only discussing the latter.
	As $V_{s_1}(\Lambda_0) = \mathbf{C}$ we only need to consider Weyl group elements of the form $w_N = \cdots s_0 s_1 s_0$ ($N$ factors).
		
	\begin{prp}
	\label{specialized fake recursion}
		Let $\mu_N$ be the weight distribution of the Demazure module $V_{w_N}(\Lambda_0)$. Then,
		\begin{align*}
			\E{\mu_{N+1}}{a^2} & = \E{\mu_N}{a^2} + \frac{N(N^2 +N+2)}{16} + 2 \Cov_{\mu_N}(b,(a-b)^2)
			&& \text{for odd $N$},  \\
			\E{\mu_{N+1}}{b^2} & = \E{\mu_N}{b^2} + \frac{N^2(N+1)}{16} + 2 \Cov_{\mu_N}(a,(a-b)^2) 
			&& \text{for even $N$}.
		\end{align*}
	\end{prp}

	\begin{proof}
		We only consider the case of odd $N$, the other case being similar.
		Then $\mu_{N+1} = D_1 \mu_N = D_1D_0 \mu_{N-1}$.
		We will use Sanderson's formula for the real character of a Demazure module \cite{MR1407880}, which implies that the distribution of $a-b$ with respect to $\mu_N$ is given by
		\begin{equation} \label{sanderson}
			\left( a - b + \tfrac {N-1}2  \right)_* \mu_N
			= 2^N B(N,\tfrac 12) ,
		\end{equation}
		where $B(N,\tfrac 12)$ is the binomial distribution for $N$ trials with success probability $\frac 12$.

		By unwinding Demazure's character formula \cite[Lemma 3.6]{bk10}, we obtain
		\begin{align*}
			\E{\mu_{N+1}}{a^2}
			&= \frac{\mu_N (\Gamma)}{\mu_{N+1} (\Gamma)} \Bigl( 2\E{\mu_N}{a^2} + 2\Cov_{\mu_N}(a^2,a-b) \Bigr)
			.
		\end{align*}
		%\textcolor{red}{\st{This is proved as} \cite[Lemma 3.6]{bk10}, \st{see there for the detailed calculation.}}
		\noindent As $\tfrac{\mu_N (\Gamma)}{\mu_{N+1} (\Gamma)} = \tfrac 12$ by \eqref{sanderson},
		
		\begin{align*}
			\E{\mu_{N+1}}{a^2}
			&= \E{\mu_N}{a^2} + \Cov_{\mu_N}(a^2,a-b) \\
			&= \E{\mu_N}{a^2} + \Cov_{\mu_N} \Bigl( (a-b)^2 + 2ab - b^2, a-b \Bigr) \\
			&= \E{\mu_N}{a^2}
			   + \underbrace{\Cov_{\mu_N}((a-b)^2,a-b)}_{=:A_N}
			   + 2 \underbrace{\Cov_{\mu_N}(ab,a-b)}_{=:B_N}
			   + \underbrace{\Cov_{\mu_N}(b^2,a-b)}_{=:C_N} .
		\end{align*}
		
		From \eqref{sanderson} it is straightforward to show that $A_N = \frac N4$.		
		Note that the symmetry of $\mu_N$ gives
		\begin{equation} \label{covariance zero}
			C_N = \Cov_{\mu_N}(b^2 ,a-b) = 0 ,
		\end{equation}
		which is a straightforward generalization of \cite[Lemma 3.7]{bk10}
		(our \autoref{covariance vanishes by symmetry} below contains a general formulation of this phenomenon).
		Finally, for $B_N$, write
		\[
			B_N
			= \Cov_{\mu_N}(ab,a-b)
			= \E{\mu_N}{ab(a-b)} - \E{\mu_N}{ab} \E{\mu_N}{a-b} .
		\]
		Then
		\begin{align*}
			\E{\mu_N}{ab(a-b)}
				& = \E{\mu_N}{((a-b)+b)b(a-b)} \\
				& = \E{\mu_N}{(a-b)^2b} + \E{\mu_N}{b^2(a-b)} \\
				&\annrel{\eqref{covariance zero}}{=}
				\E{\mu_N}{(a-b)^2b} + \E{\mu_N}{b^2}  \E{\mu_N}{a-b},
		\end{align*}
		and
		\begin{align*}
			\E{\mu_N}{ab} \E{\mu_N}{a-b}
				& = \E{\mu_N}{((a-b)+b)b} \E{\mu_N}{a-b} \\
				& = (\E{\mu_N}{(a-b)b} + \E{\mu_N}{b^2})  \E{\mu_N}{a-b} \\
				&\annrel{\eqref{covariance zero}}{=}
				(\E{\mu_N}{a-b} \cdot \E{\mu_N}{b} + \E{\mu_N}{b^2})  \E{\mu_N}{a-b} \\
				& = \E{\mu_N}{a-b}^2 \cdot \E{\mu_N}{b} + \E{\mu_N}{b^2} \E{\mu_N}{a-b}
		\end{align*}
		yield
		\begin{align*}		
			B_N & = \E{\mu_N}{(a-b)^2 b} - \E{\mu_N}{a-b}^2  \E{\mu_N}{b} \\
				&=
					\underbrace{\Var_{\mu_N}(a-b)}_{\text{$\frac N4$ by \eqref{sanderson}}}
					\underbrace{\E{\mu_N}{b}}     _{\makebox{\makebox[3em][l]{\scriptsize $\frac{(N-1)(N+2)}8$ by \cite{bk10}}}}
					+ \Cov_{\mu_N}(b,(a-b)^2) .
		\end{align*}
		Substituting $A_N$, $B_N$, $C_N$ above yields the proposition.
	\end{proof}

\section{Palindromicity of the string functions}
\label{sec:palindromicity}
%%%%%%%%%%%%%%%%%%%%%%%

	Let us describe a symmetry property of level $1$ Demazure modules, which is crucial for us.
	\begin{lem}
		\label{strings are symmetric}
		Let $\lambda \in \Lambda_0 + \mathbf{Z} \alpha_0 + \mathbb{Z} \alpha_1$.
		Then
		\[
			\dim V_{w_N}(\Lambda_0)_{\lambda}
			= \dim V_{w_N}(\Lambda_0)_{\lambda - S(N,\lambda)\delta} ,
		\]
		where
		\[
			S(N,\lambda) = \begin{cases}
				\frac 14 N^2 + (a-b)^2(\lambda) - 2a(\lambda)
				& \text{if $N$ is even,} \\
				\frac 14 (N^2 -1) + (a-b)^2(\lambda) - (a-b)(\lambda)`
				& \text{if $N$ is odd.}
			\end{cases}
		\]
	\end{lem}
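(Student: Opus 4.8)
The plan is to recast the assertion as an invariance statement and then prove it by induction along Demazure's character formula. Fix a $\delta$-string, that is, the set of weights $\lambda - m\delta$, $m \in \mathbf{Z}$, sharing a common value $f := (a-b)(\lambda)$ of the finite-weight coordinate; moving along the string by $-\delta$ leaves $f$ fixed and raises the degree $a$ by $1$. In these terms the two weights $\lambda$ and $\lambda - S(N,\lambda)\delta$ carry the same $f$ and degrees $a(\lambda)$ and $a(\lambda) + S(N,\lambda)$, and one checks directly that $S(N,\lambda) = 2c_N(f) - 2a(\lambda)$ for a center $c_N(f)$ depending quadratically on $f$ (for even $N$ one gets $c_N(f) = \tfrac18 N^2 + \tfrac12 f^2$). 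Thus the lemma is equivalent to the statement that $\mu_N$ is invariant under the involution $R_N$ of $\Gamma_0$ fixing $f$ and reflecting the degree, $a \mapsto 2c_N(f) - a$. That $R_N$ really is an involution, and in particular that $S(N,\lambda)$ is an integer so that $\lambda - S(N,\lambda)\delta$ lands back in $\Gamma_0$, follows at once from $R_N^2 = \id$ together with $2c_N(f) \in \mathbf{Z}$.

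I would then prove $(R_N)_* \mu_N = \mu_N$ by induction on $N$, using $\mu_N = D_0\mu_{N-1}$ for odd $N$ and $\mu_N = D_1\mu_{N-1}$ for even $N$. The one symmetry available for free is that $D_j\mu$ is always invariant under the pushforward by $s_j$: reversing the summation index $i \mapsto \langle\alpha_j^\vee,\lambda\rangle - i$ in $D_j\delta_\lambda = \sum_{i=0}^{\langle\alpha_j^\vee,\lambda\rangle}\delta_{\lambda - i\alpha_j}$ shows $s_j(D_j\delta_\lambda) = D_j\delta_\lambda$. Hence $\mu_N$ is $s_1$-invariant for even $N$ and $s_0$-invariant for odd $N$; since $s_1$ fixes the degree and negates $f$ while $s_0$ fixes $b$ and sends $f \mapsto 1-f$, these give exact relations at the level of whole $\delta$-slices. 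The induction then decomposes $\mu_{N-1}$ and $\mu_N$ into $\delta$-slices and expresses each slice of $\mu_N = D_j\mu_{N-1}$ through those of $\mu_{N-1}$.

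The main obstacle is exactly here. Neither $D_0$ nor $D_1$ respects the $\delta$-string decomposition: $D_0$ sums in the $\alpha_0$-direction and $D_1$ in the $\alpha_1$-direction, both transverse to the strings, so a single slice of $\mu_N$ receives contributions from a whole triangular family of slices of $\mu_{N-1}$, and these have pairwise distinct centers $c_{N-1}(f)$. A sum of polynomials palindromic about different centers is not palindromic in general, so the statement cannot be proved slice by slice. What makes it work is that the inductive palindromicity of $\mu_{N-1}$ has to be combined with the exact $s_{j'}$-invariance of $\mu_{N-1}$ carried over from the previous step, and the two conspire so that the transverse triangular sum reassembles, via a telescoping, into a single palindromic slice of $\mu_N$ whose center has shifted from $c_{N-1}(\cdot)$ to $c_N(\cdot)$. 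Verifying this telescoping, and in particular that the quadratic center shifts by precisely the stated amount (the $\tfrac12 f^2$ term being what forces the reindexing to close up), is the computational heart; it reduces to an elementary but careful identity between finite triangular sums in which the quadratic in $c_N$ emerges upon completing a square. The base cases $N = 0, 1$ are trivial, the strings being singletons.

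As a sanity check and an alternative route, the quadratic dependence of the center on $f$ is exactly the conformal-weight/theta signature of level-$1$ characters: for even $N$ the central slice $f = 0$ of $\mu_N$ is the manifestly palindromic Gaussian binomial $\qbinom{N}{N/2}$, and more generally each string function appears to be such a $q$-binomial. One could therefore instead establish these explicit string functions directly (again by induction along $D_0, D_1$) and read off palindromicity from the symmetry of the Gaussian binomial coefficients.
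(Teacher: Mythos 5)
Your primary argument has a genuine gap, and it sits exactly where you locate the ``computational heart'': the claim that the transverse triangular sum produced by $D_j$ reassembles, via a telescoping, into a single palindromic $\delta$-string of $\mu_N$ with the shifted center $c_N$. That reassembly \emph{is} the lemma --- as you yourself note, a sum of sequences palindromic about distinct centers is not palindromic in general --- and the proposal neither writes down the ``elementary but careful identity between finite triangular sums'' nor verifies that the inductive hypothesis combined with the $s_{j'}$-invariance of $\mu_{N-1}$ is actually sufficient input for it. As written, the induction step is asserted rather than proved, so the argument is incomplete at its decisive point.

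Your closing ``alternative route'' is essentially the paper's actual proof, and it is the right way to close the gap. The paper does no induction at all: it invokes Sanderson's theorem identifying $\operatorname{ch}(V_{w_N}(\Lambda_0))$ with $e^{\Lambda_0 - \lfloor N^2/4\rfloor\delta}\, P_{(N,0)}(e^{\alpha_1/2}, e^{-\alpha_1/2}; e^{\delta}, 0)$, then Hikami's expansion $P_{(N,0)}(z,z^{-1};q,0)=\sum_{k=0}^{N}\qbinom{N}{k}_q z^{2k-N}$, so that each string function is literally a Gaussian binomial coefficient in $q=e^{\delta}$; the palindromicity $\qbinom{N}{k}_q = q^{k(N-k)}\qbinom{N}{k}_{q^{-1}}$ (point reflection of the lattice paths) then yields $S(N,\lambda)$ by direct bookkeeping. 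If you want a self-contained argument in the spirit of your first route, the cleanest fix is to merge the two: prove by induction along $D_0, D_1$ that the string functions are exactly these $q$-binomials (the induction step then reduces to the $q$-Pascal identity rather than an unproved telescoping), and read off the symmetry afterwards. Either way, you should also record the elementary check that $S(N,\lambda)$ is an integer, which in the $q$-binomial picture is automatic from $k(N-k)\in\mathbf{Z}$.
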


	\begin{proof}
		Recall the definition of the bivariate Macdonald polynomials $P_\nu(z_1, z_2; q, t) \in \mathbf{Q}(q,t)[z_1,z_2]^{S_2}$, where $\nu$ is a partition into at most $2$ parts (see \cite[Chapter VI]{MR1354144}).
		We only use the specializations $P_\nu(z, z^{-1}; q, 0) \in \mathbf{Q}[z^\pm ,q^\pm]$.
		If $\nu$ is the one row partition $\nu = (N, 0)$, then \cite[Theorem 6 and Theorem 7]{MR1771615} gives
		\begin{equation} \label{sanderson macdonald ohne sigma}
			\operatorname{ch}(V_{w_N}(\Lambda_0))
			= e^{\Lambda_0 - \lfloor \frac 14 N^2 \rfloor \delta} \cdot P_\nu(e^{\frac 12\alpha_1}, e^{-\frac 12\alpha_1}; e^\delta, 0) .
		\end{equation}
		For $N \geq k \geq 0$ and $i \in \{ 0, \ldots, k(N-k) \}$ let $s_{N,k,i}$ denote the number of lattice paths in $\mathbf{Z}^2$ from $(0,0)$ to $(k, N-k)$, where each segment of the path is either $(1,0)$ or $(0,1)$, and such that the area under the path is $i$.
		As explained in \cite{MR0269521}, the Gaussian binomial coefficient with parameters $N$, $k$ in a variable $q$ is
		\[
			\qbinom Nk_q
			= \sum_{i=0}^{k(N-k)} s_{N,k,i} q^i .
		\] 
		By \cite[(3.4)]{MR1457389}
		\begin{equation}
			\label{hikami macdonald one}
			P_\nu(z, z^{-1}; q, 0)
			= \sum_{k=0}^N \qbinom Nk_q z^{2k-N} .
		\end{equation}
		Combining \eqref{sanderson macdonald ohne sigma} and \eqref{hikami macdonald one} we obtain
		\begin{align}
			\label{relation q and demazure character}
			\operatorname{ch}(V_{w_N}(\Lambda_0))
				= e^{\Lambda_0 - \lfloor \frac 14 N^2 \rfloor \delta} \cdot
					\sum_{k=0}^N \qbinom Nk_{e^\delta} e^{(2k-N)\frac 12 \alpha_1}
			.
		\end{align}

		By considering the point reflection in $\left( \frac k2, \frac{N-k}2 \right)$, it follows immediately from the above description that the Gaussian binomial coefficients are palindromic, $\qbinom Nk_q = q^{k(N-k)} \qbinom Nk_{q^{-1}}$ and our lemma follows by straightforward computation.
	\end{proof}

\section{The stretching trick}
\label{sec:stretching}
%%%%%%%%%%%%%%
	We want to establish a proper recurrence relation based on the equations described in \autoref{specialized fake recursion}. This section describes how the palindromicity of the string functions allows us to compute the values of $\Cov_{\mu_N}(\nicedot,(a-b)^2)$ at $b$ and $a$, respectively.
	
	The following two propositions follow directly from the definitions.
	
	\begin{prp} \label{measure stretching}
		Let $\mu \in \Meas(\mathbf{R}^2)$.
		Let $X, Y : \mathbf{R}^2 \to \mathbf{R}$ be the projection on the first and second component, respectively.
		Let $q : \mathbf{R}^2 \to \mathbf{R}^2$ be given by $q(x,y) = (x^2, y)$.
		Then
		\[
			\Cov_\mu(X^2,Y) = \Cov_{q_*\mu}(X,Y) .
		\]
	\end{prp}

	By saying that $s : \mathbf{R}^2 \to \mathbf{R}^2$ is a \emph{reflection at $H_1$ along $H_2$} we mean that $H_1$ is the $1$-eigen space and $H_2$ is the $(-1)$-eigen space of $s$.
	A measure $\mu \in \Meas(\mathbf{R}^2)$ is said to be \emph{symmetric} with respect to $s$ if $s_*\mu = \mu$.
	
	\begin{prp} \label{covariance vanishes by symmetry}
		Let $X, Y : \mathbf{R}^2 \to \mathbf{R}$ be the projection on the first and second component, respectively.
		Let $\mu \in \Meas(\mathbf{R}^2)$ be symmetric at $\{Y=0\}$ along $\{X=0\}$.
		Then $\Cov_\mu(X,Y) = 0$.
	\end{prp}

	Now we are ready to exploit the symmetry of the string functions associated with the Demazure module $V_{w_N}(\Lambda_0)$.
	
	\begin{lem} \label{orthogonality wrt stretched measure}
		Let $\mu_N$ be the weight distribution of the Demazure module $V_{w_N}(\Lambda_0)$ supported on the lattice $\Gamma_0 = \Lambda_0 + \mathbf{Z} \alpha_0 + \mathbf{Z} \alpha_1$.
		Define coordinates $X, Y: \Gamma_0 \to \mathbf{R}$ as follows:
%		If $N$ is odd, let
%		\[
%			X = a-b-\frac 12 \quad \text{and} \quad Y = b - \frac {N^2-2}8 .
%		\]
%		If $N$ is even, let
%		\[
%			X = a-b \quad \text{and} \quad Y = a - \frac {N^2}8 .
%		\]
		Let
		\[
			X = \begin{cases}
				a-b-\frac 12	 & \text{if $N$ is odd,} \\
				a-b          & \text{if $N$ is even,}
			\end{cases}
			\quad
			Y = \begin{cases}
				b - \frac {N^2-2}8	 & \text{if $N$ is odd,} \\
				a - \frac {N^2}8   & \text{if $N$ is even.}
			\end{cases}
		\]
		Let $q : \Gamma_0 \to \Gamma_0$ such that $X(q(\lambda)) = X(\lambda)^2$ and $Y(q(\lambda)) = Y(\lambda)$ for all $\lambda \in \Gamma_0$.
		Then
		\[
			\Cov_{q_*\mu_N}(X-2Y, X) = 0 .
		\]
	\end{lem}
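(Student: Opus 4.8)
The plan is to read \autoref{strings are symmetric} as a slice-wise symmetry of $\mu_N$ and then transport it through $q$ so that \autoref{covariance vanishes by symmetry} applies verbatim. First I would record how the palindromicity shift acts in coordinates: since $\delta = \alpha_0 + \alpha_1$, subtracting $S\delta$ from a weight increases both $a$ and $b$ by $S$, hence preserves $a-b$ and therefore preserves $X$ (which is a function of $a-b$ in both parities). Thus \autoref{strings are symmetric} says exactly that, for each fixed value of $X$, the multiplicities of $\mu_N$ along the corresponding $\delta$-string are palindromic. The first task is to locate the center of this palindrome in the $(X,Y)$ coordinates.

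To do this I would substitute the explicit $S(N,\lambda)$ into the involution $\lambda \mapsto \lambda - S(N,\lambda)\delta$ and rewrite its action as a reflection of the $Y$-coordinate at fixed $X$. Carrying this out separately for even and odd $N$ (the odd case, with its half-integer shift in $X$, being the only place that needs care), one finds in both parities that the string $\{X = x_0\}$ is symmetric in $Y$ about the value $Y = \tfrac12 x_0^2$. In other words, $\mu_N$ is invariant under the fiberwise reflection $Y \mapsto x_0^2 - Y$ on each slice $\{X = x_0\}$.

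Next I would push this symmetry forward along $q$. Since $q$ squares the $X$-coordinate and leaves $Y$ fixed, the two slices $\{X = x_0\}$ and $\{X = -x_0\}$ of $\mu_N$ are sent to the single slice $\{X = x_0^2\}$ of $q_*\mu_N$; crucially, both carry the same center $Y = \tfrac12 x_0^2$, because the center depends on $x_0$ only through $x_0^2$. Hence every slice $\{X = u\}$ of $q_*\mu_N$ is symmetric in $Y$ about $Y = \tfrac12 u$, i.e.\ $q_*\mu_N$ is invariant under the reflection $s(X,Y) = (X,\,X-Y)$. This $s$ is linear with $(+1)$-eigenspace $\{X - 2Y = 0\}$ and $(-1)$-eigenspace $\{X = 0\}$, so in the linear coordinates $P = X$, $Q = X - 2Y$ the measure $q_*\mu_N$ is symmetric at $\{Q = 0\}$ along $\{P = 0\}$. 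Applying \autoref{covariance vanishes by symmetry} to the projections $(P,Q)$ then yields $\Cov_{q_*\mu_N}(X,\,X-2Y) = 0$, which is the assertion.

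The main obstacle is the center computation of the second paragraph: one must verify that the reflection furnished by \autoref{strings are symmetric}, once rewritten in $(X,Y)$-coordinates, is genuinely centered at $Y = \tfrac12 X^2$ rather than along some other $X$-dependent line, since an $X$-dependent center that did not factor through $X^2$ would be incompatible with the merging of the $\pm x_0$ fibers under $q$. Once that identity is in hand, the remaining steps are purely formal: the invariance of $q_*\mu_N$ under $s$ is immediate, and the vanishing of the covariance follows directly from \autoref{covariance vanishes by symmetry}.
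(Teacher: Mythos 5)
Your argument is correct and follows essentially the same route as the paper: you locate the palindromic string centers from \autoref{strings are symmetric}, verify that in the $(X,Y)$-coordinates they lie on $Y=\tfrac12 X^2$, push forward along $q$ to turn this parabola of centers into the line $\{X-2Y=0\}$, and conclude via \autoref{covariance vanishes by symmetry}. Your explicit remark that the fibers $X=\pm x_0$ merge consistently under $q$ because the center depends only on $x_0^2$ is a point the paper leaves implicit, but it is not a different method.
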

	
	While reading the proof, see \autoref{string-symmetry-parabola} and \ref{string-symmetry-line} for an illustration.
	
	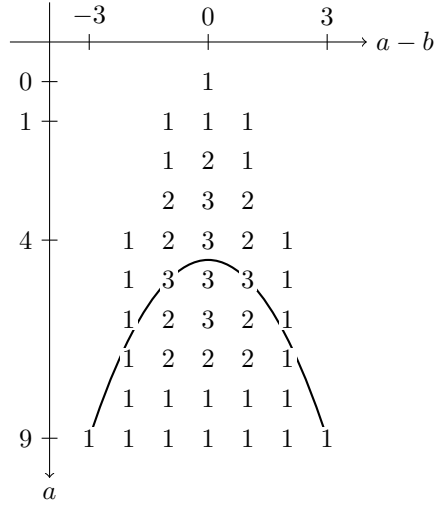
\begin{figure}
		\centering
		\begin{tikzpicture}[x=1.5em,y=-1.5em]
			\draw[->] (-5,-1) -- (4,-1) node[right]{$a-b$};
			\draw[->] (-4,-2) -- (-4,10) node[below]{$a$};
			
			\draw (-3,-.8) -- +(0,-.4) node[above]{$-3$};
			\draw (0,-.8) -- +(0,-.4) node[above]{$0$};
			\draw (3,-.8) -- +(0,-.4) node[above]{$3$};
			
			\draw (-3.8,0) -- +(-.4,0) node[left]{$0$};
			\draw (-3.8,1) -- +(-.4,0) node[left]{$1$};
			\draw (-3.8,4) -- +(-.4,0) node[left]{$4$};
			\draw (-3.8,9) -- +(-.4,0) node[left]{$9$};
			
			\draw[thick] (-3,9) parabola bend (0,4.5) (3,9);
			
			\draw ( 0,0) node[fill=white,inner sep=0.2ex]{$1$};
			
			\draw (-1,1) node[fill=white,inner sep=0.2ex]{$1$};	
			\draw ( 0,1) node[fill=white,inner sep=0.2ex]{$1$};
			\draw ( 1,1) node[fill=white,inner sep=0.2ex]{$1$};
			
			\draw (-1,2) node[fill=white,inner sep=0.2ex]{$1$};
			\draw ( 0,2) node[fill=white,inner sep=0.2ex]{$2$};
			\draw ( 1,2) node[fill=white,inner sep=0.2ex]{$1$};
			
			\draw (-1,3) node[fill=white,inner sep=0.2ex]{$2$};	
			\draw ( 0,3) node[fill=white,inner sep=0.2ex]{$3$};	
			\draw ( 1,3) node[fill=white,inner sep=0.2ex]{$2$};
				
			\draw (-2,4) node[fill=white,inner sep=0.2ex]{$1$};	
			\draw (-1,4) node[fill=white,inner sep=0.2ex]{$2$};	
			\draw ( 0,4) node[fill=white,inner sep=0.2ex]{$3$};	
			\draw ( 1,4) node[fill=white,inner sep=0.2ex]{$2$};	
			\draw ( 2,4) node[fill=white,inner sep=0.2ex]{$1$};
				
			\draw (-2,5) node[fill=white,inner sep=0.2ex]{$1$};	
			\draw (-1,5) node[fill=white,inner sep=0.2ex]{$3$};	
			\draw ( 0,5) node[fill=white,inner sep=0.2ex]{$3$};	
			\draw ( 1,5) node[fill=white,inner sep=0.2ex]{$3$};	
			\draw ( 2,5) node[fill=white,inner sep=0.2ex]{$1$};
				
			\draw (-2,6) node[fill=white,inner sep=0.2ex]{$1$};	
			\draw (-1,6) node[fill=white,inner sep=0.2ex]{$2$};	
			\draw ( 0,6) node[fill=white,inner sep=0.2ex]{$3$};	
			\draw ( 1,6) node[fill=white,inner sep=0.2ex]{$2$};	
			\draw ( 2,6) node[fill=white,inner sep=0.2ex]{$1$};
				
			\draw (-2,7) node[fill=white,inner sep=0.2ex]{$1$};	
			\draw (-1,7) node[fill=white,inner sep=0.2ex]{$2$};	
			\draw ( 0,7) node[fill=white,inner sep=0.2ex]{$2$};	
			\draw ( 1,7) node[fill=white,inner sep=0.2ex]{$2$};	
			\draw ( 2,7) node[fill=white,inner sep=0.2ex]{$1$};
				
			\draw (-2,8) node[fill=white,inner sep=0.2ex]{$1$};	
			\draw (-1,8) node[fill=white,inner sep=0.2ex]{$1$};	
			\draw ( 0,8) node[fill=white,inner sep=0.2ex]{$1$};	
			\draw ( 1,8) node[fill=white,inner sep=0.2ex]{$1$};	
			\draw ( 2,8) node[fill=white,inner sep=0.2ex]{$1$};
				
			\draw (-3,9) node[fill=white,inner sep=0.2ex]{$1$};	
			\draw (-2,9) node[fill=white,inner sep=0.2ex]{$1$};	
			\draw (-1,9) node[fill=white,inner sep=0.2ex]{$1$};	
			\draw ( 0,9) node[fill=white,inner sep=0.2ex]{$1$};	
			\draw ( 1,9) node[fill=white,inner sep=0.2ex]{$1$};	
			\draw ( 2,9) node[fill=white,inner sep=0.2ex]{$1$};	
			\draw ( 3,9) node[fill=white,inner sep=0.2ex]{$1$};	
		\end{tikzpicture}
		\caption{Weight distribution $\mu_6$ of $V_{w_6}(\Lambda_0) = V_{(s_1s_0)^3}(\Lambda_0)$ and the parabola of the string symmetry points.}
		\label{string-symmetry-parabola}
	\end{figure}

	\begin{figure}
		\centering
		\begin{tikzpicture}[x=1.5em,y=-1.5em]
			\draw[->] (-2,-1) -- (10,-1) node[right]{$a-b$};
			\draw[->] (-1,-2) -- (-1,10) node[below]{$a$};
			
			\draw (0,-.8) -- +(0,-.4) node[above]{$0$};
			\draw (1,-.8) -- +(0,-.4) node[above]{$1$};
			\draw (4,-.8) -- +(0,-.4) node[above]{$4$};
			\draw (9,-.8) -- +(0,-.4) node[above]{$9$};
			
			\draw (-0.8,0) -- +(-.4,0) node[left]{$0$};
			\draw (-0.8,1) -- +(-.4,0) node[left]{$1$};
			\draw (-0.8,4) -- +(-.4,0) node[left]{$4$};
			\draw (-0.8,9) -- +(-.4,0) node[left]{$9$};
			
			\draw[thick] (-1,4) -- (10,9.5);
			
			\draw ( 0,0) node[fill=white,inner sep=0.2ex]{$1$};
			
			\draw ( 0,1) node[fill=white,inner sep=0.2ex]{$1$};
			\draw ( 1,1) node[fill=white,inner sep=0.2ex]{$2$};
			
			\draw ( 0,2) node[fill=white,inner sep=0.2ex]{$2$};
			\draw ( 1,2) node[fill=white,inner sep=0.2ex]{$2$};
			
			\draw ( 0,3) node[fill=white,inner sep=0.2ex]{$3$};	
			\draw ( 1,3) node[fill=white,inner sep=0.2ex]{$4$};
				
			\draw ( 0,4) node[fill=white,inner sep=0.2ex]{$3$};	
			\draw ( 1,4) node[fill=white,inner sep=0.2ex]{$4$};	
			\draw ( 4,4) node[fill=white,inner sep=0.2ex]{$2$};
				
			\draw ( 0,5) node[fill=white,inner sep=0.2ex]{$3$};	
			\draw ( 1,5) node[fill=white,inner sep=0.2ex]{$6$};	
			\draw ( 4,5) node[fill=white,inner sep=0.2ex]{$2$};
				
			\draw ( 0,6) node[fill=white,inner sep=0.2ex]{$3$};	
			\draw ( 1,6) node[fill=white,inner sep=0.2ex]{$4$};	
			\draw ( 4,6) node[fill=white,inner sep=0.2ex]{$2$};
				
			\draw ( 0,7) node[fill=white,inner sep=0.2ex]{$2$};	
			\draw ( 1,7) node[fill=white,inner sep=0.2ex]{$4$};	
			\draw ( 4,7) node[fill=white,inner sep=0.2ex]{$2$};
				
			\draw ( 0,8) node[fill=white,inner sep=0.2ex]{$1$};	
			\draw ( 1,8) node[fill=white,inner sep=0.2ex]{$2$};	
			\draw ( 4,8) node[fill=white,inner sep=0.2ex]{$2$};
				
			\draw ( 0,9) node[fill=white,inner sep=0.2ex]{$1$};	
			\draw ( 1,9) node[fill=white,inner sep=0.2ex]{$2$};	
			\draw ( 4,9) node[fill=white,inner sep=0.2ex]{$2$};	
			\draw ( 9,9) node[fill=white,inner sep=0.2ex]{$2$};	
		\end{tikzpicture}
		\caption{Stretched weight distribution $q_* \mu_6$ of $V_{w_6}(\Lambda_0) = V_{(s_1s_0)^3}(\Lambda_0)$ and the line of the string symmetry points.}
		\label{string-symmetry-line}
	\end{figure}
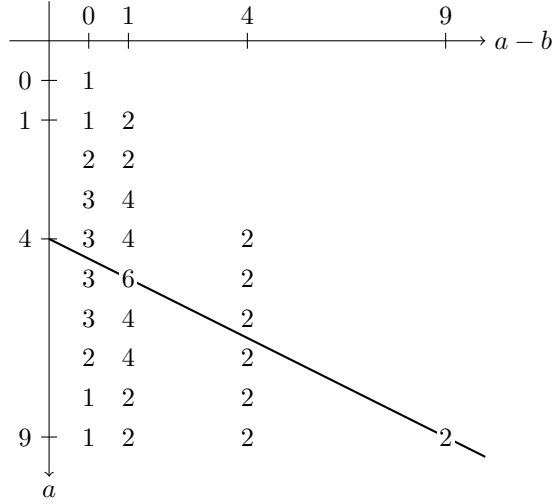

\begin{proof}
		If $N$ is odd, then by \autoref{strings are symmetric} the strings in $\mu_N$ are symmetric around
		\begin{align*}
			b & = \frac 12\left(\frac 14 (N^2-1) + (a-b)^2 - (a-b)\right) \\
				& = \frac 12\left(\frac 14 (N^2-1) + (a-b-\frac 12)^2 - \frac 14 \right) \\
				& = \frac 18 (N^2 - 2) + \frac 12(a-b-\frac 12)^2.
		\end{align*}
		In other words, they are symmetric around $Y = \frac 12 X^2$.
		If $N$ is even, then by \autoref{strings are symmetric} the strings in $\mu_N$ are symmetric around
		\[
			a = \frac 12\left(\frac 14 N^2 + (a-b)^2 \right) = \frac 18 N^2 + \frac 12(a-b)^2.
		\]
		In other words, they are symmetric around $Y = \frac 12 X^2$.
		In both cases, the string midpoints in $\mu_N$ are $(x,\frac 12 x^2)$ in coordinates $X, Y$, so the string midpoints in $q_*\mu_N$ are $(x^2, \frac 12 x^2)$.
		Hence $q_*\mu_N$ is symmetric at $\{X-2Y = 0\}$ along \{X=0\}.
		The lemma follows by \autoref{covariance vanishes by symmetry}.
	\end{proof}
	
	By formulating \autoref{orthogonality wrt stretched measure} in terms of the weight distribution of  $V_{w_N}(\Lambda_0)$ via \autoref{measure stretching} we obtain:

	\begin{cor} \label{intermediate covariance}
	Let $\mu_N$ be the weight distribution of the Demazure module $V_{w_N}(\Lambda_0)$.
	If $N$ is odd, then
		\[
			\Cov_{\mu_N}((a-b)^2 - (a-b)-2b, (a-b)^2 - (a-b)) = 0 .
		\]
	If $N$ is even, then
		\[
			\Cov_{\mu_N}((a-b)^2-2a, (a-b)^2) = 0 .
		\]
		
	\end{cor}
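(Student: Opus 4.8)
The plan is to read the corollary off directly from \autoref{orthogonality wrt stretched measure}, translating the vanishing covariance with respect to the stretched measure $q_*\mu_N$ back into a covariance with respect to $\mu_N$ itself. The tool for this is \autoref{measure stretching}; more precisely I would use the general pushforward identity $\Cov_{q_*\mu}(f,g) = \Cov_\mu(f\circ q,\, g\circ q)$, valid for arbitrary functions $f,g$ because $q_*\mu$ has the same total mass as $\mu$ and $\E{q_*\mu}{f} = \E{\mu}{f\circ q}$; \autoref{measure stretching} is exactly its special case $f=X$, $g=Y$ (and it is needed in the slightly more general form to handle the $\Cov(X,X)$ term as well). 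Since by the defining property of $q$ in \autoref{orthogonality wrt stretched measure} we have $X\circ q = X^2$ and $Y\circ q = Y$ as functions on $\Gamma_0$, applying this identity to $\Cov_{q_*\mu_N}(X-2Y,X)=0$ gives
\[
	\Cov_{\mu_N}(X^2 - 2Y,\, X^2) = 0 .
\]

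It then remains to substitute the explicit coordinate expressions from \autoref{orthogonality wrt stretched measure} and simplify, using that a covariance is unchanged when a constant is added to either of its two arguments. For odd $N$ we have $X = a-b-\tfrac12$ and $Y = b - \tfrac{N^2-2}{8}$, so $X^2 = (a-b)^2 - (a-b) + \tfrac14$ and $X^2 - 2Y = (a-b)^2 - (a-b) - 2b + \mathrm{const}$; discarding the additive constants yields exactly $\Cov_{\mu_N}((a-b)^2-(a-b)-2b,\,(a-b)^2-(a-b)) = 0$. For even $N$ we have $X = a-b$ and $Y = a - \tfrac{N^2}{8}$, so $X^2 = (a-b)^2$ and $X^2 - 2Y = (a-b)^2 - 2a + \mathrm{const}$, which after discarding the constant gives $\Cov_{\mu_N}((a-b)^2-2a,\,(a-b)^2)=0$.

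I do not expect a genuine obstacle here: the entire content of the corollary resides in \autoref{orthogonality wrt stretched measure} (hence ultimately in the palindromicity of the string functions and the symmetry criterion \autoref{covariance vanishes by symmetry}), and this final step is pure bookkeeping. The only points that require care are that $(a,b)\mapsto(X,Y)$ is an affine change of coordinates, so that $q$ is a well-defined self-map of $\Gamma_0$ and the pushforward identity applies verbatim, and that the additive constants produced by completing the square in $X^2$ and by the shifts in $Y$ are tracked so that they may be safely dropped at the end.
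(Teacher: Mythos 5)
Your proposal is correct and follows exactly the route the paper intends: the corollary is stated as an immediate consequence of \autoref{orthogonality wrt stretched measure} translated back through the pushforward identity underlying \autoref{measure stretching}, and your substitution of the coordinates $X$, $Y$ together with dropping additive constants is precisely the omitted bookkeeping (including the correct observation that the $\Cov_{q_*\mu_N}(X,X)$ term needs the identity in the slightly more general form $\Cov_{q_*\mu}(f,g)=\Cov_\mu(f\circ q,g\circ q)$ rather than the literal statement of \autoref{measure stretching}).
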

	
	Consequently we can now determine the values of $\Cov_{\mu_N}(\nicedot,(a-b)^2)$ at $b$ and $a$, respectively:
	
	\begin{lem}
	\label{determine linear form}
		Let $\mu_N$ be the weight distribution of the Demazure module $V_{w_N}(\Lambda_0)$. Then,
%		\begin{align*}
%			\Cov_{\mu_N}(b,(a-b)^2) & = \frac{N(N-1)}{16} && \text{if $N$ is odd,} \\
%			\Cov_{\mu_N}(a,(a-b)^2) & = \frac{N(N-1)}{16} && \text{if $N$ is even.}
%		\end{align*}
		\[
			\frac{N(N-1)}{16} = \begin{cases}
				\Cov_{\mu_N}(b,(a-b)^2) & \text{if $N$ is odd,} \\
				\Cov_{\mu_N}(a,(a-b)^2) & \text{if $N$ is even.}
			\end{cases}
		\]
	\end{lem}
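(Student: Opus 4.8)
The plan is to read off both cases from \autoref{intermediate covariance}, feeding in only the marginal distribution of $a-b$ (which \eqref{sanderson} and Sanderson's theorem identify with a re-centered binomial $B(N,\tfrac12)$) and, in the odd case, one extra vanishing covariance coming from the symmetry of $\mu_N$.

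Take first the even case. Since $\Cov_{\mu_N}(X,X)=\Var_{\mu_N}(X)$, bilinearity turns the relation $\Cov_{\mu_N}((a-b)^2-2a,(a-b)^2)=0$ of \autoref{intermediate covariance} into $\Var_{\mu_N}((a-b)^2)=2\,\Cov_{\mu_N}(a,(a-b)^2)$, so it suffices to evaluate $\Var_{\mu_N}((a-b)^2)$. This depends on $\mu_N$ only through the marginal of $u:=a-b$. For even $N$ that marginal is symmetric about $0$, so $\E{\mu_N}{u^2}=\tfrac N4$ and $\E{\mu_N}{u^4}$ is the fourth central moment $\tfrac{N(3N-2)}{16}$ of $B(N,\tfrac12)$; hence $\Var_{\mu_N}((a-b)^2)=\E{\mu_N}{u^4}-(\E{\mu_N}{u^2})^2=\tfrac{N(N-1)}8$ and $\Cov_{\mu_N}(a,(a-b)^2)=\tfrac{N(N-1)}{16}$.

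The odd case is parallel, with the one twist that \autoref{intermediate covariance} now involves the shifted quantity $v:=(a-b)^2-(a-b)$. The same expansion gives $\Var_{\mu_N}(v)=2\,\Cov_{\mu_N}(b,v)$, and writing $(a-b)^2=v+(a-b)$ and using bilinearity I obtain $\Cov_{\mu_N}(b,(a-b)^2)=\tfrac12\Var_{\mu_N}(v)+\Cov_{\mu_N}(b,a-b)$. The first summand is again read off from the marginal of $u=a-b$, which for odd $N$ is symmetric about $\tfrac12$: writing $u=\eta+\tfrac12$ with $\eta$ a centered $B(N,\tfrac12)$ gives $v=\eta^2-\tfrac14$, so $\Var_{\mu_N}(v)=\Var_{\mu_N}(\eta^2)=\tfrac{N(N-1)}8$ and $\tfrac12\Var_{\mu_N}(v)=\tfrac{N(N-1)}{16}$.

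It remains to kill the extra term $\Cov_{\mu_N}(b,a-b)$, which is the only point requiring more than the $a-b$ marginal and is the crux of the argument. For odd $N$ the outermost operator in Demazure's formula $\mu_N=\cdots D_1D_0\,\delta_{\Lambda_0}$ is $D_0$, so $\mu_N$ lies in the image of $D_0$ and each of its $\alpha_0$-strings is palindromic; equivalently $\mu_N$ is $s_0$-invariant. The reflection $s_0$ fixes $b$ (the $\alpha_0$-strings are precisely the level sets of $b$) and sends $a-b\mapsto 1-(a-b)$, i.e.\ it reflects $a-b$ about its mean $\tfrac12$. Applying \autoref{covariance vanishes by symmetry} to the push-forward of $\mu_N$ under $\lambda\mapsto(a-b-\tfrac12,\,b-\E{\mu_N}{b})$ — the very symmetry already used for \eqref{covariance zero} — yields $\Cov_{\mu_N}(b,a-b)=0$, finishing the odd case. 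The subtlety to keep in mind is that it is $b$, and not $a$, that this reflection preserves: pairing $a-b$ with $a$ would instead produce the nonzero value $\tfrac N4$.
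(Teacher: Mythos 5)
Your proof is correct and follows essentially the same route as the paper: both cases are extracted from \autoref{intermediate covariance}, the surviving moments are computed from the binomial marginal \eqref{sanderson}, and the odd case additionally uses $\Cov_{\mu_N}(b,a-b)=0$. The only (cosmetic) differences are that you justify $\Cov_{\mu_N}(b,a-b)=0$ directly from the $s_0$-invariance of $\mu_N$ rather than citing \cite[Lemma 3.7]{bk10}, and you organize the odd-case algebra via $v=\eta^2-\tfrac14$ instead of expanding term by term, which reproduces the paper's values $\Var_{\mu_N}((a-b)^2)=\tfrac{N(N+1)}8$ (odd) and $\tfrac{N(N-1)}8$ (even).
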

	
	\begin{proof}
		By \autoref{intermediate covariance} we obtain for odd $N$
		\begin{align*}
			\Cov_{\mu_N}(b,(a-b)^2) & = \frac 12 \big(\Var_{\mu_N}((a-b)^2) - 2 \Cov_{\mu_N}((a-b)^2,a-b) \\
				& \quad + \Var_{\mu_N}(a-b) + 2\Cov_{\mu_N}(b, a-b) \big),
		\end{align*}
		and for even $N$
		\[
			\Cov_{\mu_N}(a,(a-b)^2) = \frac 12 \Cov_{\mu_N}((a-b)^2, (a-b)^2) = \frac 12 \Var_{\mu_N}((a-b)^2) .
		\]
		We know the values on the right-hand sides of both equations. Let us recollect them in either case. 
		If $N$ is odd, we know by \cite[Lemma 3.7]{bk10} that
		$
			\Cov_{\mu_N}(b, a-b) = 0,
		$
		and from \eqref{sanderson} one can deduce
		\[
			\Cov_{\mu_N}((a-b)^2,a-b) = \frac N4 \quad \text{and} \quad \Var_{\mu_N}(a-b)  = \frac N4 .
		\]
		Hence,
		\[
			\Cov_{\mu_N}(b,(a-b)^2) = \frac 12 \bigg( \frac{N(N+1)}{8} - 2 \frac N4 + \frac N4 - 2\cdot0 \bigg) = \frac{N(N-1)}{16}.
		\]
		If $N$ is even, we again apply \eqref{sanderson} to derive
		\[
			\Var_{\mu_N}((a-b)^2) = \frac{N(N-1)}{8}.
			\qedhere
		\]
	\end{proof}
	%By means of \autoref{determine linear form}, we can describe recurrence relations in order to compute the variance of the degree distribution. We do this in the following section.
	
\section{Variance of the degree distribution}
\label{sec:variance}
%%%%%%%%%%%%%%%%%%%%%%
	
	\autoref{specialized fake recursion} and \autoref{determine linear form} immediately give:
	
	\begin{lem}[Recurrence relations]
		\label{recurrence relation}
			Let $\mu_N$ be the weight distribution of the Demazure module $V_{w_N}(\Lambda_0)$. Then,
		\begin{align*}
			\E{\mu_{N+1}}{a^2} & = \E{\mu_N}{a^2} + \frac{N^2(N+3)}{16} && \text{if $N$ is odd,} \\
			\E{\mu_{N+1}}{b^2} & = \E{\mu_N}{b^2} + \frac{N(N^2+3N-2)}{16} && \text{if $N$ is even.}
		\end{align*}
	\end{lem}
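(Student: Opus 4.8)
The plan is to derive both recurrences by pure substitution, since essentially all of the work has already been done. \autoref{specialized fake recursion} expresses the increment $\E{\mu_{N+1}}{a^2}-\E{\mu_N}{a^2}$ for odd $N$ (respectively $\E{\mu_{N+1}}{b^2}-\E{\mu_N}{b^2}$ for even $N$) as an explicit polynomial in $N$ plus twice a single covariance term, namely $\Cov_{\mu_N}(b,(a-b)^2)$ in the odd case and $\Cov_{\mu_N}(a,(a-b)^2)$ in the even case. \autoref{determine linear form} supplies the closed-form value $\frac{N(N-1)}{16}$ for precisely that covariance in each parity. So the entire proof reduces to inserting this value and collecting terms.

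First I would handle the odd case. Substituting $\Cov_{\mu_N}(b,(a-b)^2)=\frac{N(N-1)}{16}$ into the first line of \autoref{specialized fake recursion} turns the increment into $\frac{N(N^2+N+2)}{16}+2\cdot\frac{N(N-1)}{16}$. Combining over the common denominator $16$ and factoring out $N$, the numerator becomes $N(N^2+N+2)+2N(N-1)=N(N^2+3N)=N^2(N+3)$, which gives the claimed increment $\frac{N^2(N+3)}{16}$. The even case is identical in structure: substituting $\Cov_{\mu_N}(a,(a-b)^2)=\frac{N(N-1)}{16}$ into the second line yields the increment $\frac{N^2(N+1)}{16}+2\cdot\frac{N(N-1)}{16}$, whose numerator simplifies to $N(N^2+1)+2N(N-1)=N(N^2+3N-2)$, producing $\frac{N(N^2+3N-2)}{16}$.

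Since the argument consists only of plugging the two cited results together and performing elementary fraction arithmetic, there is no genuine obstacle to anticipate; the mathematical content lives entirely in \autoref{specialized fake recursion} and \autoref{determine linear form}. The sole point requiring care is checking that the numerators combine correctly in each parity, which the computations above confirm.
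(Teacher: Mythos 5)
Your proposal is correct and is exactly the paper's argument: the lemma is obtained by substituting the value $\frac{N(N-1)}{16}$ from \autoref{determine linear form} into the two identities of \autoref{specialized fake recursion} and simplifying. The only blemish is a transcription slip in the even case, where the numerator of $\frac{N^2(N+1)}{16}$ is $N^2(N+1)=N^3+N^2$ rather than $N(N^2+1)$; with the correct numerator one indeed gets $N^2(N+1)+2N(N-1)=N(N^2+3N-2)$, so your final formula stands.
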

	
	In order to resolve the recurrence relations, we need to switch between the coordinates $a$ and $b$ depending on the parity of $N$.
	Therefore, the following version of \autoref{recurrence relation} is more practical.
	
	\begin{lem}[Modified recurrence relations]
	\label{modified recurrence relation}
		Let $\mu_N$ be the weight distribution of the Demazure module $V_{w_N}(\Lambda_0)$. Then,
		\begin{align*}
			\E{\mu_{N+1}}{a^2}
			& = \E{\mu_N}{b^2} + \frac{N(N+2)(N+3)}{16}
			&& \text{if $N$ is odd,} \\
			\E{\mu_{N+1}}{b^2}
			& = \E{\mu_N}{a^2} + \frac{N(N+1)(N+2)}{16}
			&& \text{if $N$ is even.}
		\end{align*}
	\end{lem}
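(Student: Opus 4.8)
The two displayed relations differ from \autoref{recurrence relation} only in whether the ``old'' second moment on the right-hand side is $\E{\mu_N}{a^2}$ or $\E{\mu_N}{b^2}$. The plan is therefore to prove the single coordinate-exchange identity
\[
	\E{\mu_N}{a^2} - \E{\mu_N}{b^2}
	= \begin{cases}
		\frac{N(N+3)}{8} & \text{if $N$ is odd,} \\
		-\frac{N}{4}     & \text{if $N$ is even,}
	\end{cases}
\]
and then to substitute it into \autoref{recurrence relation}. For odd $N$ this replaces $\E{\mu_N}{a^2}$ by $\E{\mu_N}{b^2} + \frac{N(N+3)}8$, and the two increments add up to $\frac{2N(N+3) + N^2(N+3)}{16} = \frac{N(N+2)(N+3)}{16}$; for even $N$ it replaces $\E{\mu_N}{b^2}$ by $\E{\mu_N}{a^2} + \frac N4$ and gives $\frac{4N + N(N^2+3N-2)}{16} = \frac{N(N+1)(N+2)}{16}$, exactly as claimed. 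So everything reduces to the identity above, which I would establish by computing $\E{\mu_N}{a^2 - b^2} = \E{\mu_N}{(a-b)(a+b)}$ and splitting off the finite weight $a-b$.

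For odd $N$ I would write $a + b = 2b + (a-b)$, so that $a^2 - b^2 = 2b(a-b) + (a-b)^2$. Taking expectations and using $\Cov_{\mu_N}(b, a-b) = 0$ (\cite[Lemma 3.7]{bk10}) to replace $\E{\mu_N}{b(a-b)}$ by $\E{\mu_N}{b}\,\E{\mu_N}{a-b}$, the right-hand side involves only known quantities: the first moment $\E{\mu_N}{b} = \frac{(N-1)(N+2)}8$ from \cite{bk10}, together with $\E{\mu_N}{a-b} = \frac12$ and $\E{\mu_N}{(a-b)^2} = \frac{N+1}4$ read off from Sanderson's formula \eqref{sanderson}. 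A one-line computation then yields $\frac{N(N+3)}8$.

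For even $N$ I would instead write $a + b = 2a - (a-b)$, so that $a^2 - b^2 = 2a(a-b) - (a-b)^2$, and the task becomes to show $\E{\mu_N}{a(a-b)} = 0$ and $\E{\mu_N}{(a-b)^2} = \frac N4$. This is the step I expect to be the real obstacle, since it is here that the parity of $N$ enters essentially. Unlike the odd case, for even $N$ the element $w_N = (s_1 s_0)^{N/2}$ has a reduced word beginning with $s_1$, so $\operatorname{ch}(V_{w_N}(\Lambda_0))$ is $s_1$-invariant; equivalently, $\mu_N$ is symmetric under the reflection $a - b \mapsto -(a-b)$ fixing the degree $a$. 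This symmetry is manifest from \eqref{relation q and demazure character} via the palindromicity $\qbinom Nk_q = \qbinom N{N-k}_q$, and it fails for odd $N$, where the finite-weight coordinate $a-b$ is instead centered at $\frac12$. Granting it, $\E{\mu_N}{a-b} = 0$, and \autoref{covariance vanishes by symmetry} (applied with $X = a-b$ and $Y = a$) gives $\Cov_{\mu_N}(a, a-b) = 0$, whence $\E{\mu_N}{a(a-b)} = \Cov_{\mu_N}(a,a-b) + \E{\mu_N}{a}\,\E{\mu_N}{a-b} = 0$. Since $\E{\mu_N}{(a-b)^2} = \Var_{\mu_N}(a-b) = \frac N4$ by \eqref{sanderson}, we obtain $\E{\mu_N}{a^2 - b^2} = -\frac N4$.

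If one prefers to stay within the machinery already developed, the even case can also be done directly from \autoref{strings are symmetric}: its proof (cf.\ the computation in \autoref{orthogonality wrt stretched measure}) shows that, conditionally on the value of $a-b$, the degree $a$ is symmetric about $\frac18 N^2 + \frac12(a-b)^2$, so that $\E{\mu_N}{a(a-b)} = \frac18 N^2\,\E{\mu_N}{a-b} + \frac12 \E{\mu_N}{(a-b)^3}$; both terms vanish because the finite-weight distribution is symmetric about $0$ for even $N$. Either way, the one new ingredient beyond \autoref{recurrence relation} is the parity-dependent reflection symmetry of the finite weight; this is what makes the even case the substantive one, whereas the odd case is essentially bookkeeping of results already cited.
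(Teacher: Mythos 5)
Your proposal is correct and follows essentially the same route as the paper: both reduce the lemma to the coordinate-exchange identity for $\E{\mu_N}{a^2-b^2}$, obtained by splitting off the finite weight $a-b$ and invoking $\Cov_{\mu_N}(b,a-b)=0$ together with the first moments from \cite{bk10} and the Sanderson variance, and then add the result to \autoref{recurrence relation}. Your even-$N$ case, justified via the $s_1$-symmetry of $\mu_N$ (so $\E{\mu_N}{a-b}=0$ and $\Cov_{\mu_N}(a,a-b)=0$), correctly fills in the detail that the paper compresses into ``one similarly derives $\E{\mu_N}{b^2}=\E{\mu_N}{a^2}+\frac N4$.''
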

	
	\begin{proof}
		Write $a^2-b^2=(a-b)(a+b)$ and consider
		\begin{align*}
			\Cov_{\mu_N}(a-b,a+b) & = \E{\mu_N}{a^2-b^2} - \E{\mu_N}{a-b} \E{\mu_N}{a+b}.
		\end{align*}
		For odd $N$ we obtain
		\begin{align*}
			\E{\mu_N}{a^2} & = \E{\mu_N}{b^2} + \Cov_{\mu_N}(a-b,a) + \Cov_{\mu_N}(a-b,b)
				+ \E{\mu_N}{a-b} \E{\mu_N}{a+b} \\
				& = \E{\mu_N}{b^2} + \frac N4 + 0 + \frac 12 \left( \frac 12 + 2 \frac{(N-1)(N+2)}{8} \right) \\
				& = \E{\mu_N}{b^2} + \frac{N(N+3)}{8}
		\end{align*}
		by \cite[Lemma 3.7, Theorem 4.1]{bk10}, and hence
		\[
			\E{\mu_{N+1}}{a^2}
				= \E{\mu_N}{b^2} + \frac{N(N+3)}{8} +  \frac{N^2(N+3)}{16} 
				= \E{\mu_N}{b^2} + \frac{N(N+2)(N+3)}{16}.
		\]
		For even $N$ one similarly derives
		$
			\E{\mu_N}{b^2} = \E{\mu_N}{a^2} + \frac N4,
		$
		and consequently
		\[
			\E{\mu_{N+1}}{b^2}
				= \E{\mu_N}{a^2} + \frac N4 +  \frac{N(N^2+3N-2)}{16} 
				= \E{\mu_N}{a^2} + \frac{N(N+1)(N+2)}{16}.
				\qedhere
		\]
	\end{proof}
	
	The (modified) recurrence relations give: 
	%\textcolor{red}{plus some additional case considerations} give:

\begin{cor}
	\label{degree variance}
		Let $\mu_N$ be the weight distribution of the Demazure module $V_{w_N}(\Lambda_0)$. Then, for $N \geq 1$ we have
%		\begin{align*}
%			\Var_{\mu_N}(a) &= \frac{N(N-1)(2N+5)}{96} && \text{if $N$ is even,} \\
%			\Var_{\mu_N}(b) &= \frac{N(N-1)(2N+5)}{96} && \text{if $N$ is odd.}
%		\end{align*}
		\[
			\frac{N(N-1)(2N+5)}{96} = \begin{cases}
				\Var_{\mu_N}(a) & \text{if $N$ is even,} \\
				\Var_{\mu_N}(b) & \text{if $N$ is odd.}
			\end{cases}
		\]
	\end{cor}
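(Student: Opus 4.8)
The plan is to convert the second-moment recursions of \autoref{modified recurrence relation} into a recursion for the variance and then prove the claim by a single induction on $N$ in which the roles of the coordinates $a$ and $b$ alternate with the parity of $N$. Concretely, I would carry along the ``relevant'' second moment---namely $\E{\mu_N}{a^2}$ for even $N$ and $\E{\mu_N}{b^2}$ for odd $N$---since these are exactly the quantities appearing in \autoref{modified recurrence relation} and exactly the ones needed to form the two variances in the statement.

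The only external input is the expected value of the relevant coordinate, supplied by \cite{bk10}: for odd $N$ one has $\E{\mu_N}{b} = \frac{(N-1)(N+2)}{8}$, and for even $N$ one has $\E{\mu_N}{a} = \frac{N(N+1)}{8}$ (the latter consistent with $\E{\mu_N}{a-b} = 0$ for even $N$, which follows from the binomial symmetry of the finite-weight distribution underlying \eqref{sanderson}). Subtracting the squares of these means is the bridge that turns a second-moment recursion into a variance recursion.

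Abbreviating the target by $V(N) = \frac{N(N-1)(2N+5)}{96}$, I would first record the elementary identity $V(N+1) - V(N) = \frac{N(N+2)}{16}$. For the inductive step from even $N$ to odd $N+1$, \autoref{modified recurrence relation} gives $\E{\mu_{N+1}}{b^2} = \E{\mu_N}{a^2} + \frac{N(N+1)(N+2)}{16}$; subtracting the squared means and using the difference of squares
\[
  \left(\tfrac{N(N+1)}{8}\right)^2 - \left(\tfrac{N(N+3)}{8}\right)^2
  = \frac{N^2\bigl((N+1)^2 - (N+3)^2\bigr)}{64}
  = -\frac{N^2(N+2)}{16}
\]
collapses the quartic terms and leaves the net increment $\frac{N(N+1)(N+2)}{16} - \frac{N^2(N+2)}{16} = \frac{N(N+2)}{16} = V(N+1) - V(N)$. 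The step from odd $N$ to even $N+1$ is completely parallel, the relevant difference of squares being $\left(\tfrac{(N-1)(N+2)}{8}\right)^2 - \left(\tfrac{(N+1)(N+2)}{8}\right)^2 = -\frac{N(N+2)^2}{16}$, which again yields the increment $\frac{N(N+2)}{16}$. With the trivial base case $N = 1$ (where $b \equiv 0$ on $\mu_1$, so $\Var_{\mu_1}(b) = 0 = V(1)$), the induction closes.

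The step I expect to be the main obstacle is not any single computation but the bookkeeping that keeps the coordinate and the parity-dependent mean aligned at each transition, since \autoref{modified recurrence relation} deliberately swaps $a^2$ and $b^2$. What makes everything work is that in both parity transitions the correction coming from the means is a pure difference of squares of means that are linear in $N$, so the degree-four contributions cancel exactly and leave the clean quadratic increment $\frac{N(N+2)}{16}$ matching $V(N+1) - V(N)$. Once this cancellation is observed, no further work is required.
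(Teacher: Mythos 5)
Your proof is correct and follows essentially the same route as the paper: both rest on \autoref{modified recurrence relation} together with the means $\E{\mu_N}{a} = \frac{N(N+1)}{8}$ (even $N$) and $\E{\mu_N}{b} = \frac{(N-1)(N+2)}{8}$ (odd $N$) from \cite{bk10}. The only difference is organizational -- the paper sums the recurrence explicitly to closed-form quartic second moments and subtracts the squared means at the end, whereas you subtract first and verify that the variance increments telescope to $\frac{N(N+2)}{16} = V(N+1)-V(N)$, which avoids the quartic closed forms but is the same computation.
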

	
	\begin{proof}
		Solving the modified recurrence relations in \autoref{modified recurrence relation} yields
		\begin{align*}
			\E{\mu_N}{a^2} &= \frac{1}{16} \sum\limits_{i=0}^{\frac N2 -1} 2i(2i+1)(2i+2) + \frac{1}{16} \sum\limits_{j=0}^{\frac N2 -1} (2t+1)(2t+3)(2t+4) \\
				& = \frac{N(3N^3+10N^2+9N-10)}{192}
		\end{align*}
		for even $N$, and
		\begin{align*}
			\E{\mu_N}{b^2} &= \frac{1}{16} \sum\limits_{i=0}^{\frac{N-1}2} 2i(2i+1)(2i+2) + \frac{1}{16} \sum\limits_{j=0}^{\frac{N-3}2} (2t+1)(2t+3)(2t+4) \\
				& = \frac{(N-1)(3N^3+13N^2+10N-12)}{192}
		\end{align*}
		for odd $N$.
		Hence, for even $N$ we get
		\begin{align*}
			\Var_{\mu_N}(a)
			&= \E{\mu_N}{a^2} - \E{\mu_N}{a}^2 \\
			&= \frac{N(3N^3+10N^2+9N-10)}{192} - \left( \frac{N(N+1)}8 \right)^2 \\
			&= \frac{N(N-1)(2N+5)}{96} .
		\end{align*}
		Similarly, for odd $N$,
		\begin{align*}
			\Var_{\mu_N}(b)
			&= \E{\mu_N}{b^2} - \E{\mu_N}{b}^2 \\
			&= \frac{(N-1)(3N^3+13N^2+10N-12)}{192} - \left( \frac{(N-1)(N+2)}8 \right)^2 \\
			&= \frac{N(N-1)(2N+5)}{96} . \qedhere
		\end{align*}
%		\textcolor{red}{
%		It remains to compute $\Var_{\mu_N}(a)$ for odd $N$, and $\Var_{\mu_N}(b)$ for even $N$. Now, for all $N \geq 1$, the bilinearity of the covariance yields
%		\begin{align}
%		\label{eq:kjhdsfkjhsdf}
%			\Var_{\mu_N}(a-b) = \Var_{\mu_N}(a) - 2 \Cov_{\mu_N}(a,b) + \Var_{\mu_N}(b).
%		\end{align}
%		By \cite[Lemma 3.7]{bk10} we can substitute $\Cov_{\mu_N}(a,b)$ as follows. For $N$ odd we have
%		\begin{align}
%		\label{eq:lkeisfjkhd}
%			\Cov_{\mu_N}(a,b) = \Var_{\mu_N}(a) - \Var_{\mu_N}(a-b) ,
%		\end{align}
%		and for $N$ even
%		\begin{align}
%		\label{eq:yrkkjbdnmbd}
%			\Cov_{\mu_N}(a,b) = \Var_{\mu_N}(b) - \Var_{\mu_N}(a-b) .
%		\end{align}
%		The equations \eqref{eq:kjhdsfkjhsdf}--\eqref{eq:yrkkjbdnmbd} and our previous computations yield
%		\[
%			\Var_{\mu_N}(a) = \Var_{\mu_N}(b) + \Var_{\mu_N}(a-b) 
%							 = \frac{N(N-1)(2N+5)}{96} + \Var_{\mu_N}(a-b)
%		\]
%		for $N$ odd, and likewise for $N$ even
%		\[
%			\Var_{\mu_N}(b) = \Var_{\mu_N}(a) + \Var_{\mu_N}(a-b)
%							= \frac{N(N-1)(2N+5)}{96} + \Var_{\mu_N}(a-b) .
%		\]
%		Note that $\Var_{\mu_N}(a-b) = \frac N4$ for all $N \geq 1$ by \cite[Lemma 3.8]{bk10} which finishes the proof.
%		}
	\end{proof}
		
\section{Covariance of the weight distribution}
\label{covariance section}
%%%%%%%%%%%%%%%%%%%%%%%

	For a distribution $\mu \in \Measc(\mathbf{Z}^2)$ and coordinates $X,Y : \mathbf{Z}^2 \to \mathbf{Z}$ we define the \textbf{covariance matrix} of $X$ and $Y$ with respect to $\mu$ to be the $2 \times 2$ matrix
	\[
		\begin{pmatrix}
			\Cov_\mu (X,X) &
			\Cov_\mu (X,Y) \\
			\Cov_\mu (Y,X) &
			\Cov_\mu (Y,Y)
		\end{pmatrix} .
	\]
	%Note that the covariance matrix is symmetric.
	
	\begin{thm}[Covariance of the weight distribution]
		\label{covariance of the weight distribution}		
Let $j \in \{0, 1\}$ and $w \in W^\mathrm{aff}$ such that the length $l(ws_j) < l(w) = N$.	
	Then the covariance matrix $\Sigma$ of the degree $\langle -d, \nicedot \rangle$ and the finite weight $\langle \alpha_1^\vee, \nicedot \rangle$ in $V_{w}(\Lambda_j)$ is given by
	\[
		\Sigma = \begin{cases}
			\begin{pmatrix}
				\frac{N(N-1)(2N+5)}{96} & 0 \\
			  0                       & N
			\end{pmatrix}
			& \text{if $N \equiv j \mod (2)$,} \\
			\begin{pmatrix}
				\frac{N(N-1)(2N+5)}{96} + \frac N4 &	 \frac N2\\
				\frac N2                           & N
			\end{pmatrix}
			& \text{if $N \not\equiv j \mod (2)$.}
		\end{cases}			
	\]
	\end{thm}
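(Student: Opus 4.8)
The plan is to read off the three distinct entries of $\Sigma$ from the variance computations already assembled, reducing the case $j=1$ to $j=0$ at the very end by the Dynkin diagram symmetry. First note that the hypothesis $l(ws_j) < l(w) = N$ forces $w = w_{N,j}$, so the distribution in question is $\mu_{N,j}$. On $\Gamma_j$ the degree $\langle -d, \nicedot\rangle$ is the coordinate $a$, and the finite weight $\langle \alpha_1^\vee, \nicedot\rangle$ equals $2(a-b)$ up to an additive constant and sign; since variances and the vanishing of covariances are unaffected by these, I compute throughout with $a$ and $a-b$. The bottom-right entry is then immediate: by \eqref{sanderson} the distribution of $a-b$ is a shift of the binomial $B(N,\tfrac12)$, so $\Var_{\mu_{N,j}}(a-b) = \tfrac N4$ and $\Var(\langle\alpha_1^\vee,\nicedot\rangle) = 4\cdot\tfrac N4 = N$ in every case.

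The crucial entry is the off-diagonal, and here I use a reflection symmetry. The degree is invariant under $s_1$ (because $\langle d,\alpha_1\rangle = 0$), while the finite weight is $s_1$-anti-invariant. The congruence $N \equiv j \pmod{2}$ is exactly the condition that the last-applied (leftmost) Demazure operator in the product defining $\mu_{N,j}$ is $D_1$, which makes $\mu_{N,j}$ symmetric under $s_1$; identifying the finite weight and the degree with the two coordinates, \autoref{covariance vanishes by symmetry} then forces the covariance of an $s_1$-invariant and an $s_1$-anti-invariant function to vanish, giving off-diagonal $0$. When $N \not\equiv j$ the last operator is instead $D_0$, which does not fix the degree, and I compute the covariance directly: for $j=0$ and odd $N$, $\Cov_{\mu_N}(a, a-b) = \Var_{\mu_N}(a-b) + \Cov_{\mu_N}(b, a-b) = \tfrac N4 + 0$ using $\Cov_{\mu_N}(b,a-b)=0$ from \cite[Lemma 3.7]{bk10}, so the covariance of degree and finite weight has magnitude $2\cdot\tfrac N4 = \tfrac N2$, matching the stated entry.

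For the top-left entry I invoke \autoref{degree variance}, which gives $\tfrac{N(N-1)(2N+5)}{96}$ as $\Var_{\mu_N}(a)$ for even $N$ and as $\Var_{\mu_N}(b)$ for odd $N$. When $N \equiv j \pmod{2}$ this is precisely the degree variance (the coordinate $a$ itself for $j=0$, and $b$ after the coordinate swap $a\leftrightarrow b$ for $j=1$), so the top-left entry is $\tfrac{N(N-1)(2N+5)}{96}$. When $N \not\equiv j$ I convert between $a$ and $b$ via $a = b + (a-b)$, using $\Var_{\mu_N}(a-b)=\tfrac N4$ together with the vanishing covariance from the previous paragraph ($\Cov_{\mu_N}(b,a-b)=0$ for odd $N$, and $\Cov_{\mu_N}(a,a-b)=0$ for even $N$); this contributes the extra summand $\tfrac N4$ and yields $\tfrac{N(N-1)(2N+5)}{96} + \tfrac N4$.

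It remains to pass from $j=0$ to $j=1$. The diagram automorphism exchanging $\alpha_0 \leftrightarrow \alpha_1$ identifies $V_{w_{N,1}}(\Lambda_1)$ with $V_{w_{N,0}}(\Lambda_0)$ under the coordinate swap $a \leftrightarrow b$, and since the degree is $a$ while the finite weight is governed by $a-b$, this swap interchanges the roles of the two parities and reproduces the asserted matrix for $j=1$ from the $j=0$ computation. The genuinely hard analytic input, namely the degree variance, is already in place in \autoref{degree variance}; accordingly I expect the only real obstacle to be the careful bookkeeping of how the degree and the finite weight transform under this automorphism and under the change of parity, and the verification that the reflection symmetry responsible for the vanishing off-diagonal is matched to the condition $N \equiv j \pmod{2}$ in each of the four cases.
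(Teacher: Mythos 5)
Your proof is correct and follows essentially the same route as the paper's (much terser) argument: variance of the finite weight from Sanderson's formula \eqref{sanderson}, vanishing off-diagonal from the $s_1$-symmetry of the distribution when the last Demazure operator is $D_1$, the degree variance from \autoref{degree variance}, and the remaining cases by the coordinate conversion $a = b + (a-b)$ together with the Dynkin diagram automorphism. The only point to tighten is the sign of the off-diagonal entry in the case $N \not\equiv j$, which you leave as a "magnitude"; with $\langle\alpha_1^\vee,\nicedot\rangle = 2(a-b)$ on $\Gamma_0$ it comes out as $+\frac N2$ as stated.
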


	\begin{proof}
		We first consider the case that $N \equiv j \mod (2)$.
		The variance of the finite weight follows by \eqref{sanderson}, and the fact that degree and finite weight are uncorrelated follows from the symmetry of the weight distribution.
		The variance of the degree is given by \autoref{degree variance}, where for odd $N$ we use the nontrivial automorphism of the Dynkin diagram so switch from $V_{w_{N,0}}(\Lambda_0)$ to 	$V_{w_{N,1}}(\Lambda_1)$.
			
		As $V_{w_{N,0}}(\Lambda_0)$ corresponds to $V_{w_{N,1}}(\Lambda_1)$ under the nontrivial automorphism of the Dynkin diagram, the result for $N \not\equiv j \mod (2)$ follows by a change of coordinates.
	\end{proof}
	
	For visualization purposes, it is convenient to represent the covariance matrix by the associated \textbf{covariance ellipse}, defined as follows:
	Let $\mu$ be a measure on $\mathbf{R}^2$ with nondegenerate covariance matrix $\Sigma$.
	Then the covariance ellipse of $\Sigma$ is
	\[
		S_\mu = \{ x \in \mathbf{R}^2 : x^t \Sigma^{-1} x = 1 \} .
	\]	
	In \autoref{example figures}, the covariance ellipses have been translated to be centered at the expected weight.
	
\section{Law of large numbers}
\label{sec:wlln}
%%%%%%%%%%%%%%%%%%%%%%%%%%%

%	Recall the following fundamental inequality in probability theory (see e.g.\ \cite[(5.32)]{MR1324786}).
%	
%	\begin{prp}[Chebyshev's inequality]
%		\label{chebyshev inequality}
%		Let $P$ be a probability distribution on $\mathbf{R}$ with finite expected value and variance.
%		%such that $\E{}{P} = m$ and $\Var(P) = \varvar^2$.
%		Then, for any $k>0$ 
%		\begin{align*}
%			%P( \mathbf{R} \setminus (m-k\varvar,m+k\varvar)) \leq \frac {1}{k^2}. \\
%			P( \mathbf{R} \setminus (\E{}{P}-k\sqrt{\Var(P)},\E{}{P}+k\sqrt{\Var(P)})) \leq \frac {1}{k^2}.
%		\end{align*}
%	\end{prp}
	
	We write $\nu_N \weakto \nu$ if the sequence of measures $\nu_N$ converges weakly to $\nu$ as $N \rightarrow \infty$.
	We will use the following abstract version of the weak law of large numbers, which can be derived from Chebychev's inequality (see e.g.\ \cite[(5.32)]{MR1324786}).
	%in the usual way.
	
	\begin{prp}
		\label{abstract wlln}
		Let $(P_N)$ be a sequence of probability distributions on $\mathbf{R}$ such that $\E{}{P_N} \to c \in \mathbf{R}$ and	$\Var_{}(P_N) \to 0$. Then, $P_N \weakto \delta_c$.
	\end{prp}
	
	Finally,	
	\begin{thm}[Weak law of large numbers]
		\label{wlln}	
		Let $(\Lambda^{(k)})$ be a sequence in $\{\Lambda_0, \Lambda_1\}$, and $(w^{(k)})$ a sequence in $W^\mathrm{aff}$ such that $l(w^{(k)}) \to \infty$.
		Let $\tilde\mu^{(k)} \in \Meas(\mathbf{R}^2)$ be the joint distribution of the degree and the finite weight in $V_{w^{(k)}}(\Lambda^{(k)})$, normalized to a probability distribution and rescaled individually in the two coordinates such that $\supp(\tilde\mu^{(k)})$ just fits into the rectangle $[0,1] \times [-1,1]$.
		Then	, as $k \to \infty$,
		\[
			\tilde \mu^{(k)} \weakto \delta_{(\frac 12, 0)} .
		\]
	\end{thm}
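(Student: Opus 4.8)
The plan is to deduce the two-dimensional statement from two one-dimensional applications of the abstract weak law of large numbers (\autoref{abstract wlln}), one in each coordinate, and then assemble the marginal limits into the joint limit. First I would reduce to a base case. By the nontrivial Dynkin diagram automorphism (which exchanges $s_0 \leftrightarrow s_1$ and $\Lambda_0 \leftrightarrow \Lambda_1$) together with the observation that a leading Demazure operator $D_j$ with $\langle \alpha_j^\vee, \Lambda \rangle = 0$ acts trivially on $\delta_\Lambda$, every module $V_{w^{(k)}}(\Lambda^{(k)})$ has the same weight distribution as some $V_{w_M}(\Lambda_0)$ with $M \in \{l(w^{(k)}), l(w^{(k)})-1\}$, and in particular $M \to \infty$. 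Thus it suffices to treat $\mu_N$, the weight distribution of $V_{w_N}(\Lambda_0)$, as $N \to \infty$.

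Next I would determine the exact support of each coordinate in order to fix the two rescaling maps. For the finite weight, \eqref{sanderson} shows that $a-b$ takes precisely the values $-\frac{N-1}{2}, \dots, \frac{N+1}{2}$, so $\langle \alpha_1^\vee, \nicedot \rangle = -2(a-b)$ ranges over an interval of length $2N$ centered at $-1$, and the rescaling onto $[-1,1]$ is $f \mapsto (f+1)/N$. For the degree, reading off \eqref{relation q and demazure character} one sees that $\langle -d, \nicedot \rangle = a$ takes exactly the values $\lfloor N^2/4 \rfloor - i$ for $0 \le i \le \lfloor N^2/4 \rfloor$, so $a$ ranges over $[0, \lfloor N^2/4 \rfloor]$ and the rescaling onto $[0,1]$ is $a \mapsto a/\lfloor N^2/4 \rfloor$.

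With the rescalings fixed, I would invoke \autoref{abstract wlln} coordinatewise. For the finite weight, $\E{\mu_N}{a-b} = \frac12$ by \eqref{sanderson}, so the rescaled mean equals $0$, while by \autoref{covariance of the weight distribution} the rescaled variance is $N/N^2 = 1/N \to 0$; hence the finite-weight marginal converges to $\delta_0$. For the degree, the expected degree is $\frac{N(N+1)}{8}$ for even $N$ and $\frac{N^2+N+2}{8}$ for odd $N$ by \cite{bk10}, so the rescaled mean $\E{\mu_N}{a}/\lfloor N^2/4 \rfloor \to \frac12$, and using $\Var_{\mu_N}(a) = \frac{N(N-1)(2N+5)}{96}$ from \autoref{covariance of the weight distribution} the rescaled variance is asymptotic to $\frac{N^3/48}{N^4/16} = \frac{1}{3N} \to 0$; hence the degree marginal converges to $\delta_{1/2}$.

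Finally I would promote the two marginal limits to the joint limit. Since each $\tilde\mu^{(k)}$ is a probability measure on the fixed compact rectangle $[0,1] \times [-1,1]$, Chebyshev's inequality bounds the $\tilde\mu^{(k)}$-mass outside any $\epsilon$-neighborhood of $(\frac12, 0)$ by a quantity controlled by the two rescaled variances and the (vanishing) distances of the rescaled means from $\frac12$ and $0$; this bound tends to $0$, giving $\tilde\mu^{(k)} \weakto \delta_{(1/2, 0)}$. The main obstacle is the support computation in the degree coordinate: one must extract the exact range $[0, \lfloor N^2/4 \rfloor]$ from the character formula and verify that the mean degree $\sim N^2/8$ lands asymptotically at the midpoint of this range, which is precisely what pins the limit to $\frac12$ rather than to some other point; after that, the vanishing of the rescaled variances is the routine order-of-magnitude check that the variance is negligible against the square of the range.
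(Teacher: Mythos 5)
Your proposal is correct and follows essentially the same route as the paper: fix the rescaling from the known support of each coordinate, feed the mean and variance formulas (from \cite{bk10} and \autoref{degree variance}/\autoref{covariance of the weight distribution}) into \autoref{abstract wlln} coordinatewise, and conclude. The only differences are presentational — you make explicit the reduction to $V_{w_N}(\Lambda_0)$ via the diagram automorphism and the passage from marginal to joint convergence, both of which the paper leaves as "small modifications" or implicit — and your numerical checks (rescaled degree mean $\to \frac12$, rescaled variances $\frac1N$ and $\sim\frac{1}{3N}$) agree with the paper's.
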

	
	See \autoref{fig wlln} and \ref{degree distributions for Lambda_0} for an illustration.
	
	\begin{figure}
	\label{degree distributions for Lambda_0}
		\includegraphics{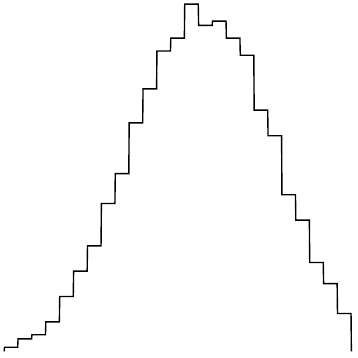}
		\includegraphics{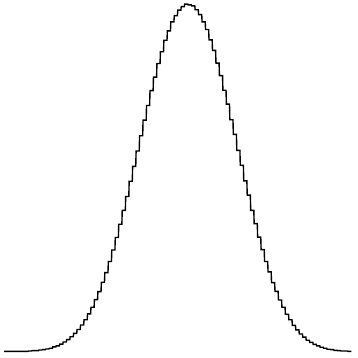} \\
		\includegraphics{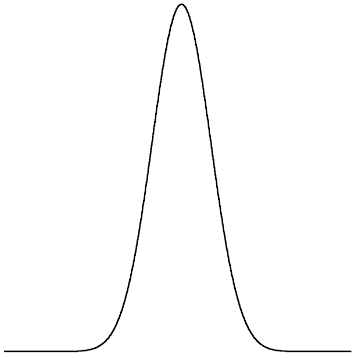}
		\includegraphics{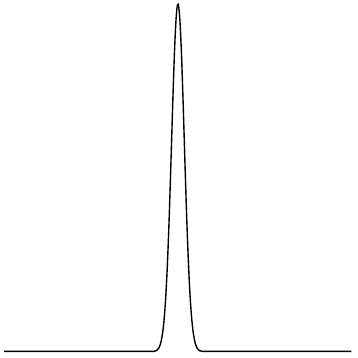}
		\caption{Degree distribution of $V_{(s_1s_0)^k}(\Lambda_0)$ for $k = 5, 10, 25, 500$.}	
	\end{figure}
		
	\begin{proof}
		Let $\mu^{(k)} \in \Meas(\mathfrak{h}^*)$ be the weight distribution of $V_{w^{(k)}}(\Lambda^{(k)})$, and $\hat\mu^{(k)} = (-d, \alpha_0^\vee)_*\mu^{(k)} \in \Meas(\mathbf{N} \times \mathbf{Z})$ the joint distribution of the degree and the finite weight.
		We only consider the sequences $\Lambda^{(k)} = \Lambda_0$ and $w^{(k)} = w_{2k,0} = (s_1s_0)^k$, as the general case follows easily with small modifications.
		Then
		\begin{align*}
			\max\{\langle -d, \lambda \rangle : V_{(s_1s_0)^k}(\Lambda_0)_\lambda \neq 0 \} &= k^2, \\
			\max\{\lvert \langle \alpha_0^\vee, \lambda \rangle \rvert : V_{(s_1s_0)^k}(\Lambda_0)_\lambda \neq 0  \} &= k ,
		\end{align*}
		by \cite[Lemma 4.2]{bk10}. Hence, $\tilde\mu^{(k)}$ is the joint distribution of $-\frac{1}{k^2}d$ and $\frac{1}{k}\alpha_1^\vee$ with respect to $\mu^{(k)}$, normalized to a probability distribution.
			By \cite[Theorem 4.5]{bk10} and \autoref{degree variance} we have
		\begin{align*}
			\E{\mu^{(k)}}{-\frac{1}{k^2}d}
			&= \frac{1}{k^2}\E{\mu^{(k)}}{-d}
			= \frac{1}{k^2} \cdot \frac{k(2k+1)}{4}
			\to \frac 12 , \\
			\Var_{\mu^{(k)}}(-\frac{1}{k^2}d)
			&= \frac{1}{k^4}\Var_{\mu^{(k)}}(-d)
			= \frac{1}{k^4} \cdot \left. \frac{N(N-1)(2N+5)}{96} \right|_{N=2k} \to 0 .
		\end{align*}
		By \autoref{abstract wlln}, $(-\frac{1}{k^2}d)_*\mu^{(k)} \weakto \delta_{\frac 12}$.
		Similarly, $(\frac{1}{k}\alpha_1^\vee)_*\mu^{(k)} \weakto \delta_0$, since
		\begin{align*}
			\E{\mu^{(k)}}{\frac{1}{k}\alpha_1^\vee}
			&= \frac{1}{k} \E{\mu^{(k)}}{\frac{1}{k}\alpha_1^\vee}
			= 0 ,\\
			\Var_{\mu^{(k)}}(\frac{1}{k}\alpha_1^\vee)
			&= \frac{1}{k^2} \Var_{\mu^{(k)}}(\alpha_1^\vee)
			= \frac{1}{k^2} \cdot 2k
			\to 0 .
			\qedhere
		\end{align*}		
	\end{proof}
	
	Let us propose a conjecture about such a concentration in general Demazure modules $V_w (\Lambda)$ with analogously normalized and scaled weight distribution $\tilde\mu_{w,\Lambda}$. By \cite[Corollary 4.3]{bk10} we know that
	\[
		\E{}{\tilde\mu_{w,\Lambda}} \rightarrow \frac{\langle c, \Lambda \rangle +2}{3(\langle c, \Lambda \rangle+1)} \quad \quad \text{as $l(w) \rightarrow \infty$.}
	\]
	
	Consequently,
	
	\begin{cnj}
	\label{cnj-wlln}
Fix a dominant integral weight $\Lambda$ and a sequence $(w^{(k)})$ in $W^{\mathrm{aff}}$ such that $l(w^{(k)}) \to \infty$.
	Let $\mu^{(k)} \in \Meas(\mathbf{N} \times \mathbf{Z})$ be the joint distribution of the degree and the finite weight in $V_{w^{(k)}}(\Lambda)$.
		Let $\tilde\mu^{(k)} \in \Meas(\mathbf{R}^2)$ be the distribution obtained from $\mu^{(k)}$ by normalizing to a probability distribution and rescaling the two coordinates individually so that $\supp(\tilde\mu^{(k)})$ just fits into the rectangle $[0,1] \times [-1,1]$.
		Then, as $k \to \infty$,
		\[
			\tilde\mu^{(k)} \weakto \delta_{\left( \frac{\langle c, \Lambda \rangle +2}{3(\langle c, \Lambda \rangle+1)}, 0 \right)} ,
		\]
		where $c = \alpha_0^\vee + \alpha_1^\vee$ denotes the canonical central element.
	\end{cnj}
	
	This conjecture is further supported by empirical evidence, see \autoref{variance table}. % and \autoref{degree distributions for Lambda_10}
	Yet the symmetry property described in \autoref{sec:palindromicity} does not hold for higher level Demazure modules. Therefore, the methods employed in this article, in particular \autoref{sec:stretching}, do not seem to generalize to this case.

	\begin{table}
		\caption{Conjectural variance of the degree $\Var_{\mu_N}(-d)$ in the Demazure module $V_{(s_1 s_0)^k}(m\Lambda_0)$, which is obtained by interpolating values for explicit $N=2k$.
		Comparison with the scaling factor for the degree (see \cite[Lemma 4.2]{bk10}) shows that the law of large numbers holds.}
		\label{variance table}
		\begin{tabular}{c|c|c}
			$m$ & $\Var_{\mu_N}(-d)$                    & $\max \{ \langle -d, \lambda \rangle : V_{(s_1 s_0)^k}(m\Lambda_0)_\lambda \neq 0 \}$ \\ \hline
			& & \\
			$2$ & $\frac{N(N-1)(4N+11)}{81}$    & $\frac 12 N^2$\\
			& & \\
			$3$ & $\frac{N(N-1)(34N+97)}{384}$  & $\frac 34 N^2$ \\			
			& & \\
			$4$ & $\frac{N(N-1)(52N+151)}{375}$ & $N^2$		
		\end{tabular}
	\end{table}

%	\begin{figure}
%	\label{degree distributions for Lambda_10}
%		\includegraphics{degree-distribution-10-4}
%		\includegraphics{degree-distribution-10-10} \\
%		\includegraphics{degree-distribution-10-50}
%		\includegraphics{degree-distribution-10-100}
%		\caption{Degree distribution of $V_{w_N}(10\Lambda_0)$ for $N = 4, 10, 50, 100$.}	
%	\end{figure}
	
	\section{Acknowledgments}
				
		The first author has been supported by the Deutsche Forschungsgemeinschaft, SPP 1388.
		The second author has been supported by the Deutsche Forschungsgemeinschaft, SFB/TR 12.
		
	\bibliographystyle{amsplain}
	\bibliography{degreevariance}

\end{document}